\theoremstyle{plain}
\newtheorem{thm}{Theorem}[section]
\newtheorem{lem}[thm]{Lemma}
\newtheorem{cor}[thm]{Corollary}
\newtheorem{prop}[thm]{Proposition}
\newtheorem{prob}{Problem}
\newtheorem*{conjTA}{Tree Alternative Conjecture}
\theoremstyle{definition}
\newtheorem{exam}[thm]{Example}
\newcommand{\sm}{\ensuremath{\smallsetminus}}
\newcommand{\Emb}{\rm{Emb}}
\newcommand{\es}{\ensuremath{\emptyset}}
\newcommand{\sub}{\subseteq}
\newcommand{\comment}[1]{}
\newcommand{\se}{self-em\-bed\-ding}
\newcommand{\nat}{{\mathbb N}}
\newcommand{\ganz}{{\mathbb Z}}
\newcommand{\DF}{\ensuremath{\mathcal D}}
\newcommand{\LF}{\ensuremath{\mathcal L}}
\newcommand{\TF}{\ensuremath{\mathcal T}}
\renewcommand{\L}{\ensuremath{\mathcal L}}
\newcommand{\D}{\ensuremath{\mathcal D}}
\def\?#1{\vadjust{\vbox to 0pt{\vss\vskip-8pt\leftline{%
     \llap{\hbox{\vbox{\pretolerance=-1
     \doublehyphendemerits=0\finalhyphendemerits=0
     \hsize16truemm\tolerance=10000\small
     \lineskip=0pt\lineskiplimit=0pt
     \rightskip=0pt plus16truemm\baselineskip8pt\noindent
     \hskip0pt        
     #1\endgraf}\hskip7truemm}}}\vss}}}
\newenvironment{txteq*}
  {
    \begin{equation*}
    \begin{minipage}[c]{0.85\textwidth} 
    \em                                
  }
  {\end{minipage}\end{equation*}\ignorespacesafterend}
\begin{document}

\title{Self-embeddings of trees}
\author{Matthias Hamann}
\address{Matthias Hamann, Alfr\'ed R\'enyi Institute of Mathematics, Hungarian Academy of Sciences, Budapest, Hungary}
\thanks{The author was partially supported by the European Research Council under the European Union's Seventh Framework Programme (FP7/2007-2013) / ERC grant agreement n$^\circ$ 617747.}

\begin{abstract}
We prove a fix point theorem for monoids of \se s of trees.
As a corollary, we obtain a result by Laflamme, Pouzet and Sauer that a tree either contains a subdivided binary tree as a subtree or has a vertex, and edge, an end or two ends fixed by all its \se s.
\end{abstract}

\maketitle

\section{Introduction}

For a group acting on a graph there is always the following choice: either it fixes a point or the group contains a free subgroup $\ganz\ast\ganz$.
More precisely the following statements are true for a group $\Gamma$ acting on a graph~$G$.
\begin{enumerate}[$\bullet$]
\item Every automorphism of~$G$ is either elliptic, hyperbolic, or parabolic;
\item $\Gamma$ fixes either a bounded subset of~$G$ or a unique limit point of~$\Gamma$ in its end space, or $G$ has precisely two limit points of~$\Gamma$, or $\Gamma$ contains two hyperbolic elements that freely generate a free subgroup.
\item there are either none, one, two or infinitely many limit points of~$\Gamma$;
\item there are either none, two or infinitely many hyperbolic limit points of~$\Gamma$;
\item the hyperbolic limit set of~$\Gamma$ is dense in the limit set of~$\Gamma$;
\item if the limit set of~$\Gamma$ is infinite, then it is a perfect set.
\end{enumerate}
We refer to \cite{H-AutoAndEndo, GroupsOnMetricSpaces, J-FiniteFixedSets, EoG, P-BilatDenseness, Woess-Amenable, W-FixedSets} for all these theorems.

All these results carry over almost verbatim to monoids of \se s of trees and in this paper, we will prove these analogues.
As a corollary of our results, we obtain a result by Laflamme et al.~\cite{LPS-Twins} that a tree either contains a subdivided binary tree or has a vertex, an edge, or a set of at most two ends fixed by all \se s.

Laflamme et al.\ used their result to varify the tree alternative conjecture of Bonato and Tardif~\cite{BonTar06} in various situations.
We will discuss the connection of our results with that conjecture in Section~\ref{sec_TAC}.
We will also give an outlook on the possible generalisation of the present results to general graphs in Section~\ref{sec_Graphs}.

\section{Main results}

Let $T$ be a tree.
A \emph{\se} is an injective map of $V(T)$ into itself that preserves the adjacency relation.

A \emph{ray} is a one-way infinite path in~$T$ and two rays are \emph{equivalent} if they have for every $v\in V(T)$ subrays in the same component of $T-v$.
This is an equivalence relation whose classes are the \emph{ends} of~$T$.
By $\Omega(T)$ we denote the set of ends of~$T$.

Note that each \se\ of~$T$ maps rays to rays and equivalent rays to equivalent rays. Thus, it induces a map from $\Omega(T)$ into itself.

By $\Emb(T)$ we denote the monoid of all \se s of~$T$.
We call a \se\ $g\in \Emb(T)$
\begin{enumerate}[$\bullet$]
\item \emph{elliptic} if it fixes a non-empty finite subtree of~$T$;
\item\emph{hyperbolic} if it is not elliptic and if it fixes precisely two ends;
\item \emph{parabolic} if it is not elliptic and if it fixes precisely one end.
\end{enumerate}

Note that Halin~\cite[Lemma~2]{H-AutoAndEndo} proved that any automorphism of a finite tree fixes either a vertex or an edge.
It follows that a \se\ is elliptic if and only if it fixes either a vertex or an edge.
Halin proved the following theorem.

\begin{thm}\label{thm_Halin}\cite[Theorem~5]{H-AutoAndEndo}
Let $g$ be a \se\ of a tree~$T$.
Then either $g$ fixes a vertex or an edge or there is a ray $R$ with $g(R)\subsetneq R$.
This ray can be extended to either a $g$-invariant double ray or a maximal ray $R'$ with $g(R')\subsetneq R'$.\qed
\end{thm}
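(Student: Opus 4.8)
The plan is to case on whether $g$ fixes a vertex or an edge. If it does, the first alternative of the theorem holds and we are done, so assume from now on that $g$ fixes neither. The set $\{d(v,g(v)):v\in V(T)\}$ consists of non-negative integers, where $d$ is the graph distance, so it has a least element $m$, attained at some vertex. If $m=0$ then $g$ fixes a vertex, contrary to assumption, so $m\ge 1$. Put $M:=\{v\in V(T):d(v,g(v))=m\}$. Since $g$ sends the path $[u,v]$ to a walk of length $d(u,v)$ joining $g(u)$ and $g(v)$, it is distance non-increasing; hence for $v\in M$ we get $m\le d(g(v),g^{2}(v))\le d(v,g(v))=m$, so $g(M)\subseteq M$.

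Next I would build the ray. Fix $v_0\in M$ and set $v_i:=g^{i}(v_0)$, so $v_i\in M$ and $d(v_i,v_{i+1})=m$ for all $i\ge 0$. Let $R$ be the walk obtained by concatenating the paths $[v_0,v_1],[v_1,v_2],[v_2,v_3],\dots$. The technical heart is to check that $R$ is reduced, i.e.\ that there is no backtracking at any $v_i$; by applying the power $g^{i-1}$ this reduces to the case $i=1$. Write $[v_0,v_1]=w_0w_1\cdots w_m$ with $w_0=v_0$ and $w_m=v_1$. Since $d(v_1,v_2)=m$, the walk $g([v_0,v_1])$ of length $m$ from $v_1$ to $v_2$ is reduced, so $[v_1,v_2]=g(w_0)g(w_1)\cdots g(w_m)$; a backtracking at $v_1$ would say $g(w_1)=w_{m-1}$. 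If $m\ge 2$ this forces $d(w_1,g(w_1))=d(w_1,w_{m-1})=m-2<m$, contradicting the minimality of $m$. If $m=1$ it forces $g^{2}(v_0)=v_0$, so $g$ fixes the edge $v_0v_1$, again a contradiction. Hence $R$ is a reduced walk; since in a tree the length of a reduced walk equals the distance between its ends, $R$ repeats no vertex and $d(v_0,v_n)=nm\to\infty$, so $R$ is a ray. Finally $g$ maps each $[v_i,v_{i+1}]$ onto $[v_{i+1},v_{i+2}]$, so $g(R)$ is the subray of $R$ starting at $v_1$; as $m\ge 1$ this is proper, i.e.\ $g(R)\subsetneq R$.

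For the last assertion I would extend $R$ backwards by a Zorn's lemma argument. Let $\mathcal{S}$ be the set of all subtrees $P\cup R$, where $P$ is a path or a ray with $v_0$ as an endpoint and $P\cap R=\{v_0\}$ (so $P\cup R$ is a ray or a double ray containing $R$ as a tail), subject to $g(P\cup R)\subseteq P\cup R$. Then $R\in\mathcal{S}$ (take $P$ to be the trivial path $\{v_0\}$), and the union of a chain in $\mathcal{S}$ again belongs to $\mathcal{S}$, so Zorn's lemma yields a maximal element $R^{*}\in\mathcal{S}$. If $R^{*}$ is a ray with initial vertex $x$, then $g(R^{*})\subsetneq R^{*}$: were it equal, $g$ would restrict to an automorphism of the ray $R^{*}$ (which is an induced subtree of $T$), and the only automorphism of a one-way infinite path is the identity, forcing $g$ to fix $x$; moreover any backward extension $R'$ of $R$ with $g(R')\subsetneq R'$ lies in $\mathcal{S}$, so $R^{*}$ is maximal among such rays, as required. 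If $R^{*}$ is a double ray, apply $g$ to its vertices: since $g$ is injective and $R^{*}$ has no repeated vertex, the resulting bi-infinite walk cannot backtrack, so it is a double ray contained in the double ray $R^{*}$ and hence equals $R^{*}$; thus $g(R^{*})=R^{*}$ is $g$-invariant.

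The step I expect to cause the most trouble is the non-backtracking claim in the second paragraph: one must extract the right contradiction from the minimality of $m$ and dispose cleanly of the degenerate case $m=1$ by producing a fixed edge. A secondary point needing care is the third paragraph --- checking that $\mathcal{S}$ is closed under unions of chains, that a ray-valued maximal element of $\mathcal{S}$ really is maximal in the sense the theorem demands, and that injectivity of $g$ alone upgrades the inclusion $g(R^{*})\subseteq R^{*}$ to equality when $R^{*}$ is a double ray.
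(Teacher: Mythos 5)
The paper offers no proof of this theorem to compare against: it is quoted from Halin \cite{H-AutoAndEndo} and closed with a \qed, so any correct argument here is new content relative to the paper. Your proof is correct, and it is essentially the classical minimal-displacement argument (the same circle of ideas as Halin's original proof). The key points all check out: an injective adjacency-preserving map of a tree is in fact an isometry onto its image (a fact the paper itself invokes in the proof of Lemma~\ref{lem_nonElliptic}), so $d(v_i,v_{i+1})=m$ for all $i$; your non-backtracking dichotomy is exactly right, with $m\ge 2$ contradicting minimality via $d(w_1,g(w_1))=m-2$ and $m=1$ producing a setwise fixed edge; and a reduced walk in a tree is a geodesic, so the concatenation is a ray $R$ with $g(R)\subsetneq R$. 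The Zorn argument is also sound: elements of a chain satisfy $P_\alpha\subseteq P_\beta$ because $P_\alpha\cap R=\{v_0\}$, so chains have upper bounds; a maximal element is a ray or a double ray; in the double-ray case injectivity plus distance preservation forces $g(R^*)=R^*$; in the ray case $g(R^*)=R^*$ would make $g$ restrict to an automorphism of a one-way infinite path, hence fix a vertex. Two small points of wording rather than substance: maximality of $R^*$ must be tested against backward extensions of $R^*$ (not merely of $R$), but any ray $R''\supsetneq R^*$ with $g(R'')\subseteq R''$ contains $R$ as a tail and hence lies in $\mathcal{S}$, so your conclusion stands; and "distance non-increasing" can be stated as distance-preserving, which slightly simplifies the automorphism-of-a-ray step.
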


Note that any $g\in\Emb(T)$ that is not elliptic fixes at least one end due to Halin's theorem, namely that end that contains the ray~$R$ of Theorem~\ref{thm_Halin}.
If $R$ can be extended to a $g$-invariant double ray~$R'$, then both ends defined by~$R'$ are $g$-invariant.
Furthermore, no other end $\omega$ can be fixed, since the unique ray with precisely one vertex on~$R'$ that lies in~$\omega$ will be mapped to a disjoint ray.
Since disjoint rays lie in distinct ends of a tree, $g$ is hyperbolic.
A similar argument shows that in the case that $R'$ is a ray, $g$ is parabolic.
Thus, we obtain as a corollary of Theorem~\ref{thm_Halin} the following.

\begin{cor}
Every \se\ of a tree is either elliptic, hyperbolic, or parabolic.\qed
\end{cor}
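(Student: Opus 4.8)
The plan is to deduce the corollary directly from Theorem~\ref{thm_Halin}. Let $g\in\Emb(T)$; if $g$ is elliptic there is nothing to show, so assume it is not. Theorem~\ref{thm_Halin} supplies a ray $R$ with $g(R)\subsetneq R$ and an extension $R'$ of $R$ that is either a $g$-invariant double ray or a maximal ray with $g(R')\subsetneq R'$. In both cases $g$ maps $R'$ into itself and, being injective and adjacency-preserving, acts on $R'$ as a shift $w_k\mapsto w_{k+m}$ with $m\ge 1$ (indexed by $\ganz$ for the double ray, by $\nat$ for the ray); in the double-ray case this uses that $g|_{R'}$ cannot be a reflection, as it maps the subray $R$ properly into itself. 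In particular $g$ moves every vertex of $R'$, and it fixes the end(s) determined by $R'$: the unique end of $R'$ when $R'$ is a ray, both ends when $R'$ is a double ray. Hence a non-elliptic $g$ fixes at least one end, and it remains to show it fixes no further end.

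The crux is: \emph{if $\omega$ is an end of $T$ that is not an end of $R'$, then $g(\omega)\neq\omega$.} For $P\in\omega$, the intersection of the subtrees $P$ and $R'$ is a subtree, hence a subpath, and an infinite such subpath would be a common tail of $P$ and $R'$, forcing $\omega$ to be an end of $R'$; so there is a ray $S\in\omega$ meeting $R'$ in a single vertex $v$. One then shows $g(S)$ meets $R'$ only in $g(v)$: the second vertex of $g(S)$ is $g(s_1)$ for some $s_1\notin R'$, and since $g$ carries the $R'$-neighbours of $v$ to the $R'$-neighbours of $g(v)$, injectivity forces $g(s_1)$ to be an off-$R'$ neighbour of $g(v)$---in the parabolic case, when $v$ is the initial vertex of $R'$, ruling out the last remaining possibility is exactly where the maximality of $R'$ from Theorem~\ref{thm_Halin} enters. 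Thus $g(S)$ leaves $R'$ at $g(v)$ and never returns; since $v\neq g(v)$ and the $v$--$g(v)$ path of $T$ runs inside $R'$, comparing the paths from a putative common vertex of $S$ and $g(S)$ to $v$ and to $g(v)$ (a median argument in $T$) gives $S\cap g(S)=\emptyset$. Disjoint rays of a tree lie in distinct ends, so $g(\omega)\neq\omega$.

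Combining the two parts, a non-elliptic $g$ fixes exactly the end(s) of $R'$: precisely two when $R'$ is a $g$-invariant double ray, so $g$ is hyperbolic, and precisely one when $R'$ is a maximal ray with $g(R')\subsetneq R'$, so $g$ is parabolic. The main obstacle is the displayed claim, and inside it the boundary case $v=w_0$ in the parabolic situation: without invoking maximality the ray $S$ could a priori be ``folded back'' onto $R'$ by $g$---which is precisely the configuration that would instead produce a $g$-invariant double ray. The rest---the structure of self-embeddings of a ray and of a double ray, and the bookkeeping over the two alternatives of Theorem~\ref{thm_Halin}---is routine.
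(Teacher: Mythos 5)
Your proof is correct and takes essentially the same route as the paper: from Theorem~\ref{thm_Halin}, a non-elliptic self-embedding preserves a maximal (double) ray $R'$, fixes its end(s), and maps the unique ray of any other end that meets $R'$ in a single vertex to a disjoint ray, so no further end is fixed, giving hyperbolic resp.\ parabolic in the two cases. The extra care you take at the initial vertex of $R'$ in the parabolic case, where the maximality of $R'$ is invoked, is precisely the detail the paper compresses into ``a similar argument.''
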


Note that for any non-elliptic \se~$g$ all rays that are preserved by~$g$ are equivalent, since otherwise the double ray between the two ends these non-equivalent rays lie in had non-equivalent tails $R_1$ and~$R_2$ with $g(R_1)\subsetneq R_1$ and $g(R_2)\subsetneq R_2$, which is impossible. 
We call this end the \emph{direction $g^+$} of~$g$.
If $g$ is hyperbolic, we denote by~$g^-$ the unique $g$-invariant end other than~$g^+$.

Since we also talk about convergence to ends, we need a topology on trees with their ends.
For this we consider the tree as a $1$-complex.
The sets $C$ where $C$ is a component of $T-x$ for some vertex~$x$ that contain an end $\omega$ form a neighbourhood basis of~$\omega$.\footnote{For a general approach to locally finite graphs with their ends seen as topological spaces, we refer to Diestel~\cite{TopSurveyI, TopSurveyII}.}
For $x\in V(T)$ and $\omega\in\Omega(T)$, we denote by $C(T-x,\omega)$ that component of $T-x$ that contains~$\omega$, that is, that component of~$T-x$ that contains a sequence of vertices converging to~$\omega$.

Trees with their ends are \emph{projective}: whenever $(x_i)_{i\in\nat}$ and $(y_i)_{i\in\nat}$ are sequences such that the distances $d(x_i,y_i)$ are bounded, then $(x_i)_{i\in\nat}$ converges to an end~$\omega$ if and only if $(y_i)_{i\in\nat}$ converges to~$\omega$, see~\cite{GroupsOnMetricSpaces,W-FixedSets}.
It follows that for any $v\in V(T)$ and any non-elliptic \se, the sequence $(g^i(v))_{i\in\nat}$ converges to~$g^+$.

The following lemma gives a condition under which a \se\ is non-elliptic.

\begin{lem}\label{lem_nonElliptic}
Let $g$ be a \se\ of a tree~$T$.
If there is an edge $xy$ such that $y$ and $g(x)$ separate $x$ and $g(y)$, then $g$ is non-elliptic and $xy$ lies on the maximal (double) ray of~$T$ that is preserved by~$g$.
\end{lem}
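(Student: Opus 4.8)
The plan is to apply Halin's theorem (Theorem~\ref{thm_Halin}) to rule out the elliptic case, and then to locate the edge $xy$ on the canonical (double) ray. First I would observe that in a tree the hypothesis ``$y$ and $g(x)$ separate $x$ and $g(y)$'' has a clean combinatorial meaning: the unique path from $x$ to $g(y)$ passes through $y$ and then through $g(x)$, in that order. Since $xy\in E(T)$, the path from $x$ to $g(y)$ begins with the edge $xy$, and then continues from $y$ to $g(x)$ to $g(y)$. In particular $g(x)\neq y$ (else $g(y)$ would not be separated from $x$ by a \emph{distinct} pair), and the segment from $y$ to $g(y)$ is the path $y,\dots,g(x),g(y)$, which is exactly the $g$-image of the path from $x$ to $g(y)$ restricted appropriately; more usefully, $d(x,g(x)) = d(x,y) + d(y,g(x)) = 1 + d(y,g(x))$, so $g$ strictly increases the distance from $x$ in a controlled way.

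Next I would build a ray. Let $P$ be the finite path from $y$ to $g(x)$ (possibly trivial if $g(x)=y$, but we have just excluded that, so $P$ is nontrivial and starts at $y$). Consider the concatenation $R_0 := x, P, g(P), g^2(P), \dots$, i.e.\ follow $x$, then $y=$ first vertex of $P$, along $P$ to $g(x)$, then apply $g$ to get from $g(x)=g(y')$ along $g(P)$ to $g^2(x)$, and so on. Because $g$ is adjacency-preserving and injective, and because the separation hypothesis guarantees the pieces $g^i(P)$ meet only at their shared endpoints $g^i(x)=g^i(y')$ (here one uses that $x,y,g(x),g(y)$ lie on a common path in the stated order, so there is no ``backtracking''), this concatenation is a ray. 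By construction $g(R_0)$ is the tail of $R_0$ starting at $g(x)$, hence $g(R_0)\subsetneq R_0$. By Halin's theorem $g$ is therefore non-elliptic, and $R_0$ is contained in the maximal (double) ray $R'$ preserved by $g$: indeed $R_0$ is a ray with $g(R_0)\subsetneq R_0$, and by the discussion following Theorem~\ref{thm_Halin} any such ray lies in the direction $g^+$ and its union over backward $g$-translates extends to the canonical maximal ray or double ray. Since the edge $xy$ is the initial edge of $R_0$, it lies on $R'$.

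The step I expect to be the main obstacle is verifying rigorously that the concatenation $R_0$ is genuinely a ray — that is, that the translated copies $g^i(P)$ together with the initial edge $xy$ do not overlap or close up. This is where the full strength of the hypothesis is used: it is not enough that $g(x)\ne x$; one needs that traversing $P$ from $y$ to $g(x)$ and then $g(P)$ from $g(x)$ to $g^2(x)$ does not revisit $x$ or $y$, and more generally that the process never returns to an already-used vertex. The clean way to see this is to note that $y$ and $g(x)$ separating $x$ and $g(y)$ forces, for the vertex set $A := V(P)\cup\{x\}$ (the path from $x$ to $g(x)$), that $g(A)\cap A = \{g(x)\}$: every vertex of $g(A)$ other than $g(x)$ lies in the component of $T - g(x)$ containing $g(y)$, which by the separation property does not contain $x$ or any internal vertex of the $x$--$g(x)$ path. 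Iterating $g$ and using that $g$ is injective then gives $g^i(A)\cap g^j(A)=\es$ for $|i-j|\ge 2$ and $g^i(A)\cap g^{i+1}(A) = \{g^{i+1}(x)\}$, which is exactly what is needed for $R_0$ to be a ray. Once this is established the rest is immediate from Halin's theorem and the remarks following it.
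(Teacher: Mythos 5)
Your route is genuinely different from the paper's. The paper never constructs a ray through $xy$: it rules out ellipticity by a metric argument (an elliptic $g$ fixes a vertex or edge $T'$, distances to $T'$ are preserved by $g$, but the separation hypothesis forces $d(y,T')<d(x,T')$ and $d(g(x),T')<d(g(y),T')$, a contradiction), and it then places $xy$ on the preserved (double) ray $R$ by comparing $d(x,R)$ with $d(y,R)$. Your construction of a forward-preserved ray $R_0=x,P,g(P),g^2(P),\dots$ through $xy$ is a sound alternative that yields both conclusions at once, and your key observation, that the hypothesis gives $A\cap g(A)=\{g(x)\}$ for the $x$--$g(x)$ path $A$ via the component of $T-g(x)$ containing $g(y)$, is correct.

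Two steps, however, are not carried by the reasons you cite. First, $g^i(A)\cap g^j(A)=\es$ for $|i-j|\ge2$ does not follow from injectivity together with $A\cap g(A)=\{g(x)\}$ alone; you need to propagate the hypothesis (applying $g^i$ shows that $g^i(y)$ and $g^{i+1}(x)$ separate $g^i(x)$ from $g^{i+1}(y)$) and then argue inductively that the path $\bigcup_{j\le i}g^j(A)$, which ends at $g^{i+1}(x)$ and meets it only once, lies entirely outside the component of $T-g^{i+1}(x)$ containing $g^{i+1}(y)$, while $g^{i+1}(A)\sm\{g^{i+1}(x)\}$ lies inside it. This is easy, but it is exactly where the work is. Second, and more seriously, the final step -- that $R_0$, hence its first edge $xy$, lies on \emph{the} maximal preserved (double) ray -- is not delivered by the discussion following Theorem~\ref{thm_Halin}: that discussion only shows that all forward-preserved rays lie in the same end, i.e.\ share a tail with the maximal one, which says nothing about the initial edge $xy$. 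What you need is that two rays $R_1,R_2$ with $g(R_i)\subsetneq R_i$ cannot branch: if they coincided from a vertex $w$ onwards but had nontrivial disjoint initial segments, then $g(w)$ lies strictly beyond $w$ on the shared tail, and the predecessors of $w$ on $R_1$ and on $R_2$ would both have to map to the vertex immediately preceding $g(w)$ on that shared tail, contradicting injectivity. With this nestedness (plus the uniqueness of the maximal preserved ray that the paper takes for granted) your conclusion follows; note also that deducing non-ellipticity from $g(R_0)\subsetneq R_0$ uses the easy converse to Theorem~\ref{thm_Halin} (distances of $g^n(x)$ to a fixed vertex or edge would be constant while $g^n(x)$ escapes along $R_0$), not the theorem as stated. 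The paper's distance arguments bypass all of these points, which is what makes its proof shorter.
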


\begin{proof}
If $g$ is elliptic, then it fixes a subtree $T'$ on either one or two vertices.
Since $d(x,T')=d(g(x),T')$ and $d(y,T')=d(g(y),T')$, we conclude that $T'$ lies in that component of $T-\{x,g(x)\}$ that contains~$y$ and also in that component of $T-\{y,g(y)\}$ that contains~$g(x)$.
But then we have $d(y,T')<d(x,T')$ and $d(g(x),T')<d(g(y),T')$.
This is a contradiction as \se s preserve the distance function.

So $g$ is not elliptic and hence preserves a maximal (double) ray~$R$ by Theorem~\ref{thm_Halin}.
If $xy$ lies not on~$R$, then if $d(x,R)<d(y,R)$, then also $d(g(x),R)<d(g(y),R)$, which is a contradiction to the assumption that $y$ and~$g(x)$ separate $x$ and $g(y)$.
Similarly, we obtain a contradiction if $d(x,R)>d(y,R)$.
Thus, $xy$ lies on~$R$.
\end{proof}

Let $M$ be a submonoid of~$\Emb(T)$.
The \emph{limit set} $\L(M)$ of~$M$ is the set of accumulation points of~$\{g(v)\mid g\in M\}$ in~$\Omega(T)$ for any $v\in V(T)$.
Note that $\L(M)$ is independent from the choice of~$v$ by projectivity.
By $\D(M)$ we denote the set of all directions of non-elliptic elements in~$M$.

\begin{thm}\label{thm_DenseNumber}
Let $T$ be a tree and $M$ a submonoid of $\Emb(T)$.
\begin{enumerate}[{\em (i)}]
\item\label{itm_DenseNumber1} If $|\L(M)|\geq 2$, then $\D(M)$ is dense in~$\L(M)$.
\item\label{itm_DenseNumber2} The set $\L(M)$ has either none, one, two, or infinitely many elements.
\item\label{itm_DenseNumber3} The set $\D(M)$ has either none, one, two, or infinitely many elements.
\end{enumerate}
\end{thm}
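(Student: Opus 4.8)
The plan is to prove the three parts in the order (i), then (ii), then (iii), with (i) doing most of the work. For part (i), I would take $\omega\in\L(M)$ and a basic neighbourhood $C=C(T-x,\omega)$ of $\omega$, and show it contains a direction of a non-elliptic element. Since $\omega$ is an accumulation point of the orbit of some base vertex $v$, and since $|\L(M)|\geq 2$, I can pick a second end $\psi\in\L(M)$, $\psi\neq\omega$, together with a vertex $x$ on the double ray from $\omega$ to $\psi$ so that $C(T-x,\omega)$ and $C(T-x,\psi)$ are distinct components. Choosing $v=x$ as base vertex, there are elements $g,h\in M$ with $g(x)$ lying deep inside $C(T-x,\omega)$ and $h(x)$ lying deep inside $C(T-x,\psi)$, where "deep" means past the first edge. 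The candidate non-elliptic element is a suitable composition, e.g.\ $g\circ h$ (or $g\circ h^k$): I would locate an edge $e=x'y'$ on the path from $x$ toward $\omega$, close enough to $x$, and verify that $h(x)$, and then $g(h(x))$, are positioned so that the separation hypothesis of Lemma~\ref{lem_nonElliptic} holds for $gh$ and the edge $e$ — namely that $y'$ and $gh(x')$ separate $x'$ and $gh(y')$. Then Lemma~\ref{lem_nonElliptic} gives that $gh$ is non-elliptic and $e$ lies on its maximal preserved (double) ray, so the direction $(gh)^+$ lies on the same side of $e$ as $\omega$, hence inside $C$. That shows $\D(M)$ meets every neighbourhood of $\omega$.

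The main obstacle is the bookkeeping in the previous paragraph: making sure the images $h(x)$ and $g(h(x))$ land on the correct sides of the chosen edge $e$ so that the hypothesis of Lemma~\ref{lem_nonElliptic} is literally satisfied. The subtlety is that a \se\ need not be distance-nonincreasing "toward" a given end — it preserves distances but can move the base point in unexpected ways — so I would argue using the component structure of $T-x$ rather than distances: since $h(x)\in C(T-x,\psi)$ lies strictly on the $\psi$-side of $x$, and $x$ separates $\psi$ from $\omega$, the vertex $h(x)$ is "behind" $e$ relative to $\omega$; applying $g$, which pushes $x$ into $C(T-x,\omega)$, I need $g$ to also push $h(x)$ past $e$ on the $\omega$-side. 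Here I may need to replace $g$ by a high power, or first replace $h$ by a high power $h^k$ to push $h(x)$ arbitrarily far toward $\psi$, exploiting that $\psi\in\L(M)$ so arbitrarily large such displacements are available along some sequence in $M$; the projectivity of $T$ together with the fact that distances are preserved under \se s then forces $g(h^k(x))$ to lie far toward $\omega$ for appropriate choices, and in particular strictly past $e$. This is the step I expect to require the most care.

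For part (ii), suppose $\L(M)$ has at least three elements; I must show it is infinite. By part (i), $\D(M)$ is dense in $\L(M)$, so it suffices to show $\D(M)$ is infinite, or directly that $\L(M)$ is infinite. Pick three distinct directions $g^+,h^+$ and a third end $\omega\in\L(M)$ not equal to either; I would use $g$ and $h$ as "ping-pong"-type elements and show the orbit of a base point accumulates at infinitely many ends. Concretely, for large $n$ the compositions $g^n h$, $g h^n$, and iterated mixed words have, by the argument of part (i) applied repeatedly, directions that are pairwise distinct (they are separated by suitable edges), giving infinitely many elements of $\D(M)\subseteq\L(M)$ (using that $\L(M)$ is closed and $\D(M)\subseteq\closure{\D(M)}\subseteq\L(M)$ since each $g^+$ is a limit of the orbit $g^i(v)$). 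The only alternative cases are $|\L(M)|\in\{0,1,2\}$, which completes (ii).

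For part (iii), note first $\D(M)\subseteq\L(M)$, because for non-elliptic $g$ the sequence $g^i(v)$ converges to $g^+$, as recorded in the excerpt. If $|\D(M)|\geq 3$, then $|\L(M)|\geq 3$, so $\L(M)$ is infinite by (ii); but then by (i) $\D(M)$ is dense in the infinite set $\L(M)$, and a dense subset of an infinite Hausdorff space with no isolated points — or more simply, a dense subset of an infinite set in which every point is an accumulation point of $\D(M)$ — is itself infinite. Hence $|\D(M)|$ cannot be a finite number $\geq 3$, so it is $0$, $1$, $2$, or infinite. I would double-check the edge case where $\L(M)$ might be finite yet $\D(M)$ large: this cannot happen since $\D(M)\subseteq\L(M)$, so $|\D(M)|\leq|\L(M)|$ whenever the latter is finite, and the finite values of $|\L(M)|$ are exactly $0,1,2$ by (ii).
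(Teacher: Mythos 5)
Your plan for part (i) stands or falls with exactly the step you flag yourself: showing that $gh^k(x)$ (and the images of the endpoints of your chosen edge $e$) land on the $\omega$-side of $e$, so that Lemma~\ref{lem_nonElliptic} applies to $gh^k$. The justification you propose --- push $h^k(x)$ very deep into $C(T-x,\psi)$ and invoke projectivity plus preservation of distances --- does not close this gap. Projectivity only compares sequences at uniformly bounded mutual distance, and neither $d(gh^k(x),h^k(x))$ nor $d(gh^k(x),x)$ is bounded here; knowing $d(gh^k(x),g(x))=d(h^k(x),x)$ is large says nothing about the side of $e$ on which $gh^k(x)$ lies. Concretely, the component of $T-g(x)$ containing $gh^k(x)$ is the one containing $g(x')$, where $x'$ is the neighbour of $x$ towards $\psi$, and nothing in your hypotheses controls $g(x')$: a \se\ with $g(x)$ deep in $C(T-x,\omega)$ may fold the $\psi$-branch at $x$ back towards and past $x$. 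For instance, in the $3$-regular tree with double ray $\ldots d_{-1}xd_1\ldots$ ($x=d_0$, $d_1$ towards $\omega$) there is an embedding with $g(d_{-k})=d_{N-k}$ and $g(d_k)=d_{N+k}$ for all $k\geq 0$; composing with $h$ sending $x$ to $d_{-m}$, $m\gg N$, puts $gh(x)=d_{N-m}$ deep on the $\psi$-side, so the separation hypothesis at your edge $e$ fails. This is precisely the difficulty the paper's proof is organised around: it splits according to whether the approximating elements are elliptic; in the elliptic case it uses their fixed subtrees and passes to subsequences so that also $h_ig_i(x)$ (note the reversed order) is controlled; when the $h_i$ are non-elliptic it distinguishes configurations of $x$ and $g_i(x')$ relative to the invariant ray of $h_i$ and must sometimes use the triple composition $g_ih_ig_i$ instead of $g_ih_i$; and --- crucially --- it never claims that the direction of the composition lies in the prescribed neighbourhood of $\omega$. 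It only shows $f_i(x)\to\omega$ and then treats the resulting non-elliptic sequence by a separate distance argument with the invariant rays ($d(v,R_i)\geq d(g_i(v),R_i)$), a case your sketch does not address at all (you would need it even when non-elliptic elements with images near $\omega$ are handed to you, since their directions need not be near $\omega$).

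There is also a smaller gap in your part (ii): choosing non-elliptic $g,h$ with merely $g^+\neq h^+$ does not exclude $g^+=h^-$ and $h^+=g^-$ (two hyperbolic elements with a common axis and opposite directions), in which case mixed words in $g,h$ need not yield any new directions, so the ping-pong as described can stall. The paper instead chooses non-elliptic $f,g$ with $f^+$ not fixed by $g$ --- possible because (i) and $|\LF(M)|\geq 3$ force $|\DF(M)|\geq 3$ --- and then simply observes that the ends $g^j(f^+)$ are pairwise distinct and lie in $\LF(M)$ because $g^jf^i(v)\to g^j(f^+)$; no free-monoid-type argument is needed for (ii). Your part (iii) is fine given (i) and (ii) and agrees with the paper's deduction.
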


\begin{proof}
To prove~(\ref{itm_DenseNumber1}), let $\omega\in\LF(M)$ and $v\in V(T)$.
Then there is a sequence $(g_i)_{i\in\nat}$ in~$M$ such that $(g_i(v))_{i\in\nat}$ converges to~$\omega$.
If there are infinitely many non-elliptic among the $g_i$, then we may assume that all $g_i$ are non-elliptic.
For $i\in\nat$, let $R_i$ be the unique maximal (double) ray of~$T$ that is preserved by~$g_i$, which exists by Theorem~\ref{thm_Halin}.
Then we have $d(v,R_i)\geq d(g_i(v),R_i)$.
Let $u\in V(T)$.
If infinitely many $R_i$ have a tail in $g_i^+$ that lies in $C(T-u,\omega)$, we conclude that their directions $g_i^+$ converge to~$\omega$.
Let us suppose that only finitely many $R_i$ have a tail in $C(T-u,\omega)$.
Since $(g_i(v))_{i\in\nat}$ converges to~$\omega$, there are infinitely many $i\in\nat$ with $d(g_i(v),u)>d(u,v)$.
For all of those we conclude
$$\begin{array}{lll}
d(v,R_i)&\leq&d(v,u)+d(u,R_i)\\
&<&d(g_i(v),u)+d(u,R_i)\\
&=&d(g_i(v),R_i).
\end{array}$$
This contradiction to $d(v,R_i)\geq d(g_i(v),R_i)$ shows that $\omega$ lies in the closure of $\DF(M)$ if infinitely many $g_i$ are non-elliptic.

By considering an finite subsequence of $(g_i)_{i\in\nat}$, we may assume that all $g_i$ are elliptic.
Similarly, let $\omega'\in\LF(M)$ with $\omega'\neq\omega$ and let $(h_i)_{i\in\nat}$ be a sequence in~$M$ such that $(h_i(v))_{i\in\nat}$ converges to~$\omega'$.
Let $x\in V(T)$ lie on the unique double ray between $\omega$ and~$\omega'$ and let $x'$ be the neighbour of~$x$ that separates~$x$ from~$\omega'$.

If there are infinitely many $h_i$ elliptic, we may assume, by looking at a subsequence, that all of them are elliptic.
Also by taking subsequences, we may assume that all $g_i(x)$ lie in $C(T-x,\omega)$ and all $h_i(x)$ and $h_ig_i(x)$ lie in $C(T-x,\omega')$.
We consider the sequence $(g_ih_i)_{i\in\nat}$.
Then $x$, $g_i(x)$, and $g_ih_i(x)$ separate $x'$ from $g_ih_i(x')$.
By Lemma~\ref{lem_nonElliptic}, we obtain that $g_ih_i$ is not elliptic.
We claim that $(g_ih_i(v))_{i\in\nat}$ converges to~$\omega$.
To see this, let $y\in V(T)$.
Then there is some $n\in\nat$ such that for all $i\geq n$ we have that $g_i(x)$ lies in $C(T-y,\omega)$.
But as $g_i(x)$ separates $x$ and $g_ih_i(x)$, also $g_ih_i(x)$ lies in~$C(T-y,\omega)$.
Thus, $(g_ih_i(x))_{i\in\nat}$ converges to~$\omega$.

So at most finitely many $h_i$ are elliptic.
Again, we consider a subsequences such that all $h_i$ are not elliptic and such that all $g_i(x)$ lie in $C(T-x,\omega)$ and all $h_i(x)$ and $h_ig_i(x)$ lie in $C(T-x,\omega')$.
Let $R_i$ be the maximal (double) ray $R$ that is preserved by~$h_i$
First assume that $g_i(x')$ lies on~$R_i$.
As $x,x'$ separate $g_i(x')$ from $h_ig_i(x')$, we conclude that $x$ and~$x'$ lies on~$R$ as well and we have that $h_ig_i(x')$ and $x'$ separate $h_ig_i(x)$ from $x$.
We consider $g_ih_ig_i$ is this situation.
Since $g_ih_ig_i(x')$ and $g_i(x')$ separate $g_ih_ig_i(x)$ from $g_i(x)$, we conclude that $f_i:=g_ih_ig_i$ is not elliptic by Lemma~\ref{lem_nonElliptic}.

If $g_i(x')$ does not lie on~$R_i$, but $x$ lies on~$R_i$, set $f_i:=g_ih_i$.
Then $x'$ and $h_ig_i(x)$ separate $h_ig_i(x')$ from $x$.
So $g_i(x')$ and $g_ih_ig_i(x)$ separate $g_i(x)$ from $g_ih_ig_i(x')$.
We conclude by Lemma~\ref{lem_nonElliptic} that $f_i$ is not elliptic.

If neither $g_i(x')$ nor~$x$ lie on~$R_i$, set $f_i:=g_ih_i$.
Lemma~\ref{lem_nonElliptic} implies that there is some double ray in~$T$ that contains $h_i(x),h_i(x'),x',x,g_i(x),g_i(x'),g_ih_i(x'),g_ih_i(x)$ in this particular order.
This implies that $f_i$ is not elliptic.

So we obtain in all three cases that $f_i$ is not elliptic and that $g_i(x)$ separates $x$ from $f_i(x)$.
If we prove that $f_i(x)$ converges to $\omega$, we may replace $(g_i)_{i\in\nat}$ by $(f_i)_{i\in\nat}$ and are done by our first case, where all $g_i$ were non-elliptic.

To prove that $f_i(x)$ converges to~$\omega$, let $x_1,x_2,\ldots$ be the ray that starts at~$x$ and lies in~$\omega$.
It suffices to show that $C:=C(T-x_i,\omega)$ contains all but finitely many $f_i(x)$.
But this is a direct consequence of the facts that $C$ contains all but finitely many $g_i(x)$ and that $g_i(x)$ separates $x$ from $f_i(x)$ in all cases.
Thus, the first case proves (\ref{itm_DenseNumber1}).

To prove (\ref{itm_DenseNumber2}), let us assume $|\LF(M)|\geq 3$.
Let $f,g\in M$ be non-elliptic and such that $f^+$ is not fixed by~$g$.
These exist as every non-elliptic \se\ fixes at most two ends.
Then all $g^i(f^+)$ are distinct ends and all of them are distinct from~$g^+$ and~$g^-$.
Then $g^jf^i(v)\to g^j(f^+)$.
So all $g^j(f^+)$ lie in $\LF(M)$ and hence $\LF(M)$ contains infinitely many ends.

Finally, (\ref{itm_DenseNumber3}) is a direct consequence of (\ref{itm_DenseNumber1}) and (\ref{itm_DenseNumber2}).
\end{proof}

In the case of automorphisms\footnote{We refer to~\cite{GroupsOnMetricSpaces,W-FixedSets} for the corresponding definitions in case of automorphisms instead of \se s.}, the situation $|\LF(M)|\geq 2$ implies the existence of hyperbolic automorphisms, cp.~\cite{GroupsOnMetricSpaces,W-FixedSets}.
For \se s this is no longer the case, as we shall illustrate with the following example.
But if we add the extra assumption that some end of~$T$ is fixed by~$M$, we obtain the existence of some hyperbolic element of~$M$ in Proposition~\ref{prop_hypEx}.

\begin{exam}\label{ex_TreeWithoutHyper}
Let $T_0$ be the rooted binary tree, i.\,e.\ the tree where one vertex has degree~$2$ and all others have degree~$3$.
We add a new finite non-trivial path~$P$ to~$T_0$ that stars at the vertex of degree~$2$ and obtain a tree~$T$.
We claim that the monoid~$M$ of all \se s of~$T$ contains no hyperbolic element.
Seeking a contradiction, let us suppose that $f\in M$ is hyperbolic.
Let $R$ be the $f$-invariant double ray.
Then at all vertices~$x$ but one of~$R$, say~$u$, there is a binary tree with root~$x$ that is otherwise disjoint from~$R$.
But as some vertex of~$R$ is mapped onto~$u$, also its binary tree must be mapped onto the tree hanging of~$R$ at~$u$.
But this is impossible as the tree hanging of at~$u$ is a proper subtree of the rooted binary tree, which has a vertex of degree~$1$.
This shows that $M$ has no hyperbolic element.
\end{exam}

We note that for a hyperbolic $g\in M$ it can happen that $g^-$ does not lie in~$\LF(M)$.
To see this, we modify Example~\ref{ex_TreeWithoutHyper} a bit.

\begin{exam}\label{ex_TreeWithoutHyperContd}
Let $T_0$ be the rooted binary tree, i.\,e.\ the tree where one vertex has degree~$2$ and all others have degree~$3$.
We add a new ray~$R$ to~$T_0$ that stars at the vertex of degree~$2$ and obtain a tree~$T$.
A similar argumentation as in Example~\ref{ex_TreeWithoutHyper} shows that all hyperbolic elements $g$ of~$M$ fix the end $\omega$ that contains~$R$ and that $\omega=g^-$.
Note that there are hyperbolic elements in this situation.
\end{exam}

\begin{prop}\label{prop_hypEx}
Let $T$ be a tree and $M$ a submonoid of~$\Emb(T)$.
\begin{enumerate}[(i)]
\item If $|\LF(M)|\geq 2$ and some end is fixed by~$M$, then $M$ contains a hyperbolic \se.
\item If $|\LF(M)|=2$, then all non-elliptic elements of~$M$ are hyperbolic.
\end{enumerate}
\end{prop}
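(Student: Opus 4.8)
The plan is to deduce both parts from Theorem~\ref{thm_DenseNumber}(\ref{itm_DenseNumber1}) together with the elementary fact that a non-elliptic \se\ always fixes its own direction. First note that $\L(M)$ is closed, being the set of accumulation points of a set, and that any finite subset of the Hausdorff space $\Omega(T)$ is discrete. Hence if $|\L(M)|\ge 2$ then a dense subset of $\L(M)$ has at least two points, and if $|\L(M)|=2$ then ``dense'' means ``equal''. In particular, Theorem~\ref{thm_DenseNumber}(\ref{itm_DenseNumber1}) yields $|\D(M)|\ge 2$ whenever $|\L(M)|\ge 2$, so in either case there are non-elliptic $f,g\in M$ with $f^+\neq g^+$.

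For (i), suppose for contradiction that $M$ contains no hyperbolic \se. Then the non-elliptic elements $f$ and $g$ above are both parabolic, so each of them fixes exactly one end, namely its own direction. Since every element of $M$ fixes the given $M$-invariant end $\omega_0$, in particular $f$ and $g$ do, which forces $f^+=\omega_0=g^+$ — contradicting $f^+\neq g^+$. Hence $M$ contains a hyperbolic \se.

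For (ii), write $\L(M)=\{\omega_1,\omega_2\}$; as above this set is discrete, so Theorem~\ref{thm_DenseNumber}(\ref{itm_DenseNumber1}) gives $\D(M)=\{\omega_1,\omega_2\}$, and we may pick a non-elliptic $h\in M$ with $h^+=\omega_2$. Let $g\in M$ be an arbitrary non-elliptic \se; it fixes $g^+$, which we may assume to be $\omega_1$, so it suffices to show that $g$ also fixes $\omega_2$, for then $g$ fixes two ends and, being non-elliptic, is hyperbolic. Fix $v\in V(T)$. Since $h$ is non-elliptic, $h^i(v)\to\omega_2$; applying $g$ and using that $gh^i\in M$, I want to conclude that $g(\omega_2)$ lies in $\L(M)=\{\omega_1,\omega_2\}$. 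If $g(\omega_2)=\omega_1$, then, since $g$ maps disjoint rays to disjoint rays and distinct ends contain disjoint rays, $g$ is injective on $\Omega(T)$; combined with $g(\omega_1)=\omega_1$ this gives $\omega_2=\omega_1$, which is absurd. Hence $g(\omega_2)=\omega_2$, as required.

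The one non-formal step, and the place where I expect the real work, is the passage from $h^i(v)\to\omega_2$ to $g(h^i(v))\to g(\omega_2)$, i.e.\ the continuity of the \se~$g$ on $T$ together with its ends. This can be checked directly: since $h$ preserves a ray $R$ in $\omega_2$ with $d(v,R)$ finite and \se s preserve distances, the points $h^i(v)$ stay within a fixed distance of $R$ with their nearest points on $R$ running off to $\omega_2$; applying the distance-preserving map $g$, the points $g(h^i(v))$ stay within that same distance of the ray $g(R)$, which lies in $g(\omega_2)$, again with nearest points running off along it, so projectivity of $T$ with its ends gives $g(h^i(v))\to g(\omega_2)$. (Alternatively one may simply invoke the continuity of \se s.) Everything else used above — that $\L(M)$ is closed, that a non-elliptic \se\ fixes its direction and at most one further end, and the injectivity of \se s on ends — is already recorded in the preliminaries or is routine.
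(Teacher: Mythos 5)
Your proof is correct and follows essentially the same route as the paper: part (i) rests on the density of $\D(M)$ in $\L(M)$ producing a direction different from the $M$-fixed end, and part (ii) shows that each non-elliptic element must fix both limit ends and is therefore hyperbolic. The differences are cosmetic: you phrase (i) contrapositively, and in (ii) you spell out, via projectivity and injectivity of the induced end map, why $\L(M)$ is invariant under every element of $M$ --- the detail behind the paper's one-line claim that non-elliptic elements preserve the double ray between the two limit points.
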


\begin{proof}
Let $|\LF(M)|\geq 2$, and let $\eta\in\LF(M)$ be fixed by~$M$.
Since $\DF(M)$ is dense in $\LF(M)$ by Theorem~\ref{thm_DenseNumber}\,(\ref{itm_DenseNumber1}), we find some $\mu\in\DF(M)$ with $\mu\neq \eta$.
Let $g\in M$ with $g^+=\mu$.
As $g$ fixes $\eta,$ it must leave the double ray between $\eta$ and~$\mu$ invariant.
Thus, $g$ is hyperbolic.

Similarly, if $|\LF(M)|=2$, all non-elliptic \se s in~$M$ must leave the double ray between the two limit points invariant.
Thus, they are hyperbolic \se s.
\end{proof}

Now we are able to prove our fixed point theorem for \se s.

\begin{thm}\label{thm_FixedPoint}
Let $T$ be a tree and $M$ a submonoid of $\Emb(T)$.
Then one of the following holds.
\begin{enumerate}[{\em(i)}]
\item\label{itm_FixedPoint1} $M$ fixes a either a vertex or an edge of~$T$;
\item\label{itm_FixedPoint2} $M$ fixes a unique element of~$\L(M)$;
\item\label{itm_FixedPoint3} $\L(M)$ consists of precisely two elements;
\item\label{itm_FixedPoint4} $M$ contains two non-elliptic elements that do not fix the direction of the other.
\end{enumerate}
\end{thm}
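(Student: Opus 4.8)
The plan is to split into four cases according to the cardinality of $\L(M)$, which by Theorem~\ref{thm_DenseNumber}\,(\ref{itm_DenseNumber2}) has $0$, $1$, $2$, or infinitely many elements. If $|\L(M)|=2$ we are already in alternative~(\ref{itm_FixedPoint3}). If $\L(M)=\{\omega\}$, I would show that $M$ fixes $\omega$, giving~(\ref{itm_FixedPoint2}): every $g\in M$ extends to a continuous self-map of $T\cup\Omega(T)$, since it maps the unique ray from a vertex $u$ to an end $\eta$ onto the unique ray from $g(u)$ to $g(\eta)$ and hence carries a neighbourhood basis of $\eta$ into one of $g(\eta)$; so for a sequence $(g_i)_{i\in\nat}$ in $M$ with $g_i(v)\to\omega$ one gets $(gg_i)(v)=g(g_i(v))\to g(\omega)$ for every $g\in M$, and since these are vertices converging to an end, $g(\omega)$ is an accumulation point of the orbit $\{h(v)\mid h\in M\}$, so $g(\omega)\in\L(M)=\{\omega\}$.

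If $\L(M)$ is infinite, I would deduce alternative~(\ref{itm_FixedPoint4}) by counting. By Theorem~\ref{thm_DenseNumber}\,(\ref{itm_DenseNumber1}) the set $\D(M)$ is dense in the infinite space $\L(M)$, and as finite subsets of $\Omega(T)$ are closed, $\D(M)$ must be infinite; choose non-elliptic $f_1,\dots,f_4\in M$ with pairwise distinct directions $\mu_i=f_i^+$. If~(\ref{itm_FixedPoint4}) failed, then for each pair $i\neq j$ one of $f_i,f_j$ would fix the other's direction; but a non-elliptic \se\ fixing an end other than its own direction is hyperbolic and that end is its $g^-$, so each $f_i$ fixes at most one $\mu_j$ with $j\neq i$. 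Counting the ordered pairs $(i,j)$ with $f_i(\mu_j)=\mu_j$ then gives at least $\binom{4}{2}=6$ of them (at least one per unordered pair) but at most $4$ (at most one per value of $i$), a contradiction.

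The remaining and hardest case is $\L(M)=\emptyset$, where I would establish~(\ref{itm_FixedPoint1}). First, every element of $M$ is then elliptic, for a non-elliptic $g$ has $g^i(v)\to g^+\in\L(M)$. I would attach to each $g\in M$ its fixed-point set $F_g$ in $T$ regarded as a $1$-complex, a non-empty subtree. The crucial sub-step is a Serre-type lemma: if $F_g\cap F_h=\emptyset$ for some $g,h\in M$, then a suitable product, e.g.\ $\phi:=gh$, is non-elliptic. To prove it one takes the bridge $p_0\dots p_k$ between $F_g$ and $F_h$, with $p_0$ in $F_g$ and $p_k$ in $F_h$; since $g$ fixes $p_0$ but moves the neighbouring bridge vertex $p_1$, the vertex $g(p_1)$ is a neighbour of $p_0$ different from $p_1$, so $g$ sends the component of $T-p_0$ containing $p_1$ into another component, and $h$ does the analogous thing at $p_k$; a short distance computation then shows $\phi$ maps the whole bridge into a single further component and that $p_0$ together with $\phi(p_1)$ separates $p_1$ from $\phi(p_0)$, so $\phi$ is non-elliptic by Lemma~\ref{lem_nonElliptic}. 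As every element of $M$ is elliptic, no such pair exists, so the subtrees $F_g$ pairwise intersect and, by the Helly property of trees, the family $\{F_g\mid g\in M\}$ has the finite-intersection property. If $\bigcap_{g\in M}F_g\neq\emptyset$, this set contains a vertex or is a single edge-midpoint, and in either case $M$ fixes a vertex or an edge, i.e.\ (\ref{itm_FixedPoint1}) holds. If $\bigcap_{g\in M}F_g=\emptyset$, I would argue that the nearest-point projections of a base vertex onto the sets $F_g$ escape along a single ray $\rho$, so every $F_g$ contains a tail of $\rho$ and therefore fixes the end $\omega$ of $\rho$ pointwise along that tail; choosing elements $g_k\in M$ whose fixed tail of $\rho$ starts ever further towards $\omega$ forces $g_k(v)\to\omega$, so $\omega\in\L(M)=\emptyset$, which is absurd. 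Hence $\bigcap_{g\in M}F_g\neq\emptyset$ and~(\ref{itm_FixedPoint1}) holds.

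I expect the case $\L(M)=\emptyset$ to be the main obstacle. The Serre-type construction of a non-elliptic element has to be run through the bridge and Lemma~\ref{lem_nonElliptic} rather than by a direct edge-flipping argument, because \se s need not be surjective; and one has to rule out separately the possibility that $M$ fixes merely an end, by checking that any such fixed end must in fact lie in $\L(M)$. The other three cases are short.
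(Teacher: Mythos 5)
Your proposal is correct, and it takes a genuinely different (and in one respect more complete) route than the paper. The paper argues by contraposition: assuming no fixed vertex or edge and no fixed set of at most two limit ends, it invokes Theorem~\ref{thm_DenseNumber} to get $\L(M)$ and $\D(M)$ infinite, and then refutes the failure of~(\ref{itm_FixedPoint4}) by first extracting a single end fixed by all non-elliptic elements (splitting into a parabolic case and an all-hyperbolic case with three elements $f,g,h$) and then exhibiting two directions that contradict this. You instead split by $|\L(M)|\in\{0,1,2,\infty\}$; your singleton case (via $M$-invariance of $\L(M)$ and continuity of the induced end map, which the paper also uses implicitly, e.g.\ in its perfectness proof) and your counting argument with four non-elliptic elements of distinct directions (each fixes at most one direction other than its own, so $6$ required incidences against at most $4$ available) are clean replacements for the paper's case analysis. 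The main divergence is your case $\L(M)=\es$: the paper's opening reduction silently passes over it (its hypotheses force $\L(M)$ infinite only when $\L(M)\neq\es$), whereas the theorem does claim real content there, namely that $\L(M)=\es$ forces a fixed vertex or edge; your Serre-type lemma (disjoint fixed trees of two elliptic \se s make $gh$ non-elliptic, via the edge $p_0p_1$ and Lemma~\ref{lem_nonElliptic} -- this does check out, using that fixed-point sets of \se s are convex) plus Helly and the escape argument settles it. Two small points to tighten when writing it up: the projections that ``escape along a ray'' should be those onto intersections of \emph{finite} subfamilies, and you must first dispose of the subcase where these stay at bounded distance from the base vertex, where the eventually stable projection point is a common fixed vertex or edge-midpoint; and the claim that every $F_g$ then contains a tail of $\rho$ needs the convexity of $F_g$ spelled out. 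Both are routine, so the sketch is completable as stated.
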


\begin{proof}
Let us assume that neither a vertex nor an edge is fixed by~$M$ and that no subset of $\LF(M)$ of size at most~$2$ is fixed by~$M$.
In particular, $\LF(M)$ and $\DF(M)$ are infinite by Theorem~\ref{thm_DenseNumber} and we find two non-elliptic \se s in~$M$ with distinct directions.

Let us suppose that (\ref{itm_FixedPoint4}) does not hold.
First, we show that there is a unique $\eta\in\LF(M)$ fixed by all non-elliptic \se s in~$M$.
If $M$ contains some parabolic \se\ $g$, then its direction must be fixed by all non-elliptic \se s since (\ref{itm_FixedPoint4}) does not hold.
As $g$ fixes no other end, its direction is the unique element of~$\LF(M)$ fixed by all non-elliptic \se s.

If all non-elliptic elements of~$M$ are hyperbolic, let $f,g,h\in M$ such that $g^+,h^+\notin\{f^+,f^-\}$ and $g^+\neq h^+$.
These exist as $\DF(M)$ is infinite.
As (\ref{itm_FixedPoint4}) does not hold, we know that $g$ and $h$ fix $f^+$ and hence $h^-=f^+=g^-$.
But then $g$ and $h$ satisfy (\ref{itm_FixedPoint4}) as $g$ fixes only $g^+$ and $f^+$ and $h$ fixes only $h^+$ and $f^+$.
This contradiction shows that there is a unique $\eta\in\LF(M)$ fixed by all non-elliptic \se s.

Since $\DF(M)$ is infinite, there are distinct directions $\mu,\nu\in\DF(M)\sm\{\eta\}$.
Let $f,g\in M$ be not elliptic such that the directions of $f$ and~$g$ are $\mu$ and~$\nu$, respectively.
Since $f$ fixes $\mu$ and~$\eta$, it fixes not other end, in particular it does not fix~$\nu$.
Similarly, $g$ does not fix~$\mu$.
This contradiction to the assumption that~(\ref{itm_FixedPoint4}) does not hold shows the assertion.
\end{proof}

We shall see in Theorem~\ref{thm_FreeMonoid} that we may pick the non-elliptic elements in Theorem~\ref{thm_FixedPoint}\,(\ref{itm_FixedPoint4}) so that they generate a free submonoid of~$M$ freely.

\section{Infinitely many directions}

In this section, we take a closer look at Theorem~\ref{thm_FixedPoint}~(\ref{itm_FixedPoint4}).
 But before we do that, we prove that \se s that are not elliptic converge uniformly towards~$g^+$.

\begin{lem}\label{lem_UniformConv}
Let $T$ be a tree and $g\in\Emb(T)$ be not elliptic.
For every neighbourhoods $U$ of~$g^+$ and $V$ of~$g^-$, if $g$ is hyperbolic, there exists $N\in\nat$ such that $g^n(T-V)\sub U$ for all $n\geq N$.
\end{lem}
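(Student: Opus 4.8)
The statement asserts uniform convergence of the iterates $g^n$ towards $g^+$, away from a neighbourhood of $g^-$. First I would fix the maximal (double) ray $R$ preserved by $g$, which exists by Theorem~\ref{thm_Halin}; since $g$ is hyperbolic, $R$ is a double ray and its two ends are $g^+$ and $g^-$. Shrinking $U$ and $V$ if necessary, I may assume $U=C(T-a,g^+)$ and $V=C(T-b,g^-)$ for vertices $a,b$ lying on $R$, with $b$ between $a$ and $g^-$ on $R$ (if $U$ or $V$ were given as arbitrary neighbourhoods, pass to smaller basic ones of this form; proving the statement for the smaller ones proves it for the larger). Because $g$ acts on $R$ as a nontrivial translation towards $g^+$, there is a well-defined ``displacement'' $k\ge 1$ such that $g$ shifts $R$ by $k$ edges towards $g^+$.

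The key step is to understand where a vertex $x\notin V$ can go under iteration. Write $R=(\dots,r_{-1},r_0,r_1,\dots)$ with indices increasing towards $g^+$, say $a=r_p$ and $b=r_q$ with $q<p$, and $g(r_i)=r_{i+k}$. For any vertex $x$, let $r_{n(x)}$ be the vertex of $R$ closest to $x$ (the ``projection'' of $x$ to $R$); then $g$ maps the branch of $T-R$ hanging at $r_{n(x)}$ into the branch hanging at $r_{n(x)+k}$, so $n(g(x))=n(x)+k$ and $d(g(x),R)=d(x,R)$. The hypothesis $x\notin V=C(T-b,g^-)$ means precisely that $x$ is \emph{not} separated from $g^+$ by $b=r_q$; equivalently $n(x)\ge q$ (or $x=b$ itself, giving $n(x)=q$). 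Hence $n(g^n(x))=n(x)+nk\ge q+nk$, which tends to $+\infty$. Once $n(x)+nk>p$, i.e. once $q+nk>p$, the vertex $g^n(x)$ lies in the branch of $T$ hanging at some $r_j$ with $j>p$, which is contained in $C(T-a,g^+)=U$; indeed $r_j$ itself lies in $U$ and $g^n(x)$ is connected to $r_j$ through a path avoiding all of $r_i$ with $i\le p$. So taking $N:=\lceil (p-q)/k\rceil +1$ works uniformly: for all $n\ge N$ and all $x\notin V$ we get $g^n(x)\in U$, which is the claim $g^n(T-V)\subseteq U$.

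The one genuinely substantive point — and the main obstacle to writing this cleanly — is justifying that $g$ interacts with the projection to $R$ as described, namely that $g$ maps the component of $T-R$ at $r_i$ into the component of $T-R$ at $r_{i+k}$, and consequently $n(g(x))=n(x)+k$ for \emph{every} vertex $x$, not just those on $R$. This follows because $g$ is an adjacency-preserving injection fixing $R$ setwise and acting as the shift $r_i\mapsto r_{i+k}$ on it: the unique $r_i$--$x$ path mapping to a walk from $r_{i+k}$ to $g(x)$, this walk is a path (injectivity), and it meets $R$ only in $r_{i+k}$ (if it met $R$ again at $r_j$, the segment from $r_{i+k}$ to $r_j$ together with the shifted $R$-segment would create a cycle in the tree $T$, or force $g$ to be non-injective on $R$). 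The uniformity over $x$ then comes for free from the fact that the bound $N$ depends only on the finite quantities $p,q,k$ and not on $x$. Everything else is bookkeeping with the tree order along $R$.
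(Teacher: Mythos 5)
Your proof is correct, and it starts with the same normalisation as the paper---replace $U$ and $V$ by basic neighbourhoods $C(T-a,g^+)$ and $C(T-b,g^-)$ with $a,b$ on the invariant double ray $R$---but the engine afterwards is genuinely different. The paper never introduces the translation length or the projection to $R$: it only uses that $g^n(b)$ eventually lies in $C(T-a,g^+)$ (pointwise convergence to $g^+$) together with a separation argument: if $v\notin V$, then $b$ separates $v$ from its $g^-$-side neighbour $b'$ on $R$, hence $g^n(b)$ separates $g^n(v)$ from $g^n(b')$, and since $g^n(b)$ already lies beyond $a$, every component of $T-g^n(b)$ other than the one containing $g^n(b')$ (and hence $a$) is contained in $U$; uniformity comes from this separation rather than from any displacement computation. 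Your route instead proves the structural fact that $g$ acts on $R$ as a surjective translation by some $k\ge 1$ and maps the branch of $T-R$ at $r_i$ into the branch at $r_{i+k}$, and then does projection bookkeeping; this works and even yields an explicit $N$, but it requires the extra claim you rightly flag, whose cleanest justification is: if some $p\notin R$ had $g(p)\in R$, then, since $g(R)=R$, a vertex of $R$ would have the same image, contradicting injectivity (your ``cycle'' phrasing is a roundabout way of saying this). One small slip: ``$x\notin V$ iff $x$ is not separated from $g^+$ by $b$'' is not an equivalence, since a vertex in a branch hanging at $b$ itself is separated from $g^+$ by $b$ yet lies outside $V$; but the condition you actually use, $x\notin V\iff n(x)\ge q$, is the correct one, so this does not affect the argument. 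Finally, the paper's proof is written so as to cover the parabolic variant as well, which you do not address, but since the statement only asserts the hyperbolic case nothing is missing.
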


\begin{proof}
As $U$ is a neighbourhood of~$g^+$, there is some vertex $x$ on the maximal (double) ray $R_g$ with $g(R_g)\sub R_g$ such that the component $U'$ of $T-x$ that contains $g^+$ lies in~$U$.
Similarly, if $g$ is hyperbolic, there is a vertex $y$ on~$R_g$ such that the component $V'$ of $T-x$ that contain $g^-$ lies in~$V$.
If $g$ is parabolic, let $y$ be the first vertex of the ray~$R_g$.
As $g$ is either hyperbolic or parabolic, there is some $N\in\nat$ such that $g^N(y)\in U'$.
Then $g^n(y)\in U'$ for all $n\geq N$.
Let $y'$ be the neighbour of~$y$ on~$R_g$ such that $y$ separates $y'$ and~$g^+$ and let $v\in V(T)$ be any vertex outside of~$V'$.
Then $g^n(v)$ lies in a component of~$T-g^n(y)$ that does not contain~$y'$.
Hence, we have $g^n(v)\in U'\sub U$.
\end{proof}

Now we can investigate the situation of Theorem~\ref{thm_FixedPoint}~(\ref{itm_FixedPoint4}) in more detail.

\begin{thm}\label{thm_FreeMonoid}
Let $T$ be a tree and $M$ a submonoid of $\Emb(T)$.
If $M$ contains two non-elliptic $g,h$ such that neither $g$ fixes $h^+$ nor $h$ fixes~$g^+$, then there are $m,n\in\nat$ such that $g^m$ and $h^n$ generate a free submonoid of~$M$ freely.
Furthermore, $T$ contains a subdivided $3$-regular tree.
\end{thm}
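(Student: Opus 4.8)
The plan is to use the uniform convergence from Lemma~\ref{lem_UniformConv} to set up a ping-pong argument for the submonoid case. First I would fix disjoint open neighbourhoods for the four relevant ends $g^+, g^-, h^+, h^-$. Here I must be careful: $g$ and $h$ need not be hyperbolic, so $g^-$ or $h^-$ may not exist; in that case I replace the missing neighbourhood by taking $V=\es$ in Lemma~\ref{lem_UniformConv}, so that the statement reads $g^n(T)\sub U$ eventually. The crucial point is that the four ends in question are pairwise distinct: $g^+\neq h^+$ and $h^+\neq g^+$ by hypothesis, and if say $g^+=h^-$ then $h$ fixes $g^+$, contrary to assumption; a short case check rules out all coincidences among $\{g^+,g^-\}$ and $\{h^+,h^-\}$. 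So I can pick a vertex $z$ and pairwise disjoint components $U_g\ni g^+$, $U_g'\ni g^-$ (if $g$ hyperbolic), $U_h\ni h^+$, $U_h'\ni h^-$ (if $h$ hyperbolic) of $T-z$, and a base vertex $v\notin U_g\cup U_g'\cup U_h\cup U_h'$ lying "between'' all of them.

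Next I would apply Lemma~\ref{lem_UniformConv} to get $m$ with $g^m(T\sm U_g')\sub U_g$ and $n$ with $h^n(T\sm U_h')\sub U_h$, shrinking the $U$'s slightly if needed so that each image lands inside the chosen component and, importantly, so that $U_g\cap(U_g'\cup U_h\cup U_h')=\es$ is preserved and likewise for $U_h$. Write $a=g^m$, $b=h^n$. Then for any nonempty word $w$ in $a,b$ with no inverses, reading from the right, I track the image $w(v)$: applying $b$ sends $v$ (which is outside $U_h'$) into $U_h$, then applying $a$ sends everything outside $U_g'$ — in particular $U_h$, which is disjoint from $U_g'$ — into $U_g$, and so on. Thus the first (leftmost) letter determines which of $U_g,U_h$ contains $w(v)$, and since $U_g\cap U_h=\es$ and $v$ lies in neither, $w(v)\neq v$. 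Hence $w$ is not the identity on $V(T)$, so distinct reduced words give distinct elements: $a$ and $b$ freely generate a free submonoid. The one technical nuisance is making sure a single pair $(m,n)$ works simultaneously for all words; this is fine because the containments $g^k(T\sm U_g')\sub U_g$ hold for \emph{all} $k\geq m$ and similarly for $h$, so in a word each maximal block $a^k$ or $b^k$ is absorbed in one step.

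For the final sentence — that $T$ contains a subdivided $3$-regular tree — I would argue as follows. The orbit of $v$ under the free monoid $\langle a,b\rangle$ is an embedded copy of the "rooted'' binary tree: the vertex $w(v)$ for each reduced word $w$, together with the (finite) paths in $T$ realising the concatenations, form a tree in which $v$ has degree $2$ (neighbours towards $a(v)$ and $b(v)$) and every $w(v)$ has degree $3$ (one edge back, two edges to $aw(v)$ and $bw(v)$). Disjointness of these connecting paths beyond their shared endpoints again follows from the ping-pong disjointness of $U_g$ and $U_h$ and their iterated images: the path from $w(v)$ to $aw(v)$ lies in the appropriate nested component and meets the path from $w(v)$ to $bw(v)$ only at $w(v)$. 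To upgrade the rooted binary tree to a genuinely $3$-regular (every vertex degree $3$) subdivided tree, I note that $a$ is non-elliptic, so $a^{-1}$ "extends'' the root $v$ backwards: the sequence $v, a^{-1}$-preimages is not available since $a$ need not be surjective, so instead I use that $a$ preserves a maximal (double) ray $R_a$ through a vertex near $v$ and attach the half of $R_a$ pointing to $g^-$ (or, if $g$ is parabolic, one extra branch obtained from a third ping-pong letter) to give $v$ its third edge. The main obstacle here is this last bookkeeping step — ensuring the extra branch at the root is disjoint from the binary tree built from $a,b$ — which is handled by choosing $v$ and the neighbourhoods so that the $g^-$-direction (or the auxiliary direction) is separated from $U_g\cup U_h$ by $z$.
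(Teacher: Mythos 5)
Your ping-pong setup is essentially the paper's (disjoint neighbourhoods of $g^+$ and $h^+$ avoiding $g^-,h^-$, powers $m,n$ from Lemma~\ref{lem_UniformConv}), but the step where you conclude freeness is a genuine gap: from ``$w(v)\neq v$ for every non-empty word $w$, hence $w$ is not the identity'' you infer that distinct words give distinct elements. That inference is valid for groups, where a relation $w_1=w_2$ can be rewritten as $w_1w_2^{-1}=\mathrm{id}$, but not for monoids: a submonoid can satisfy non-trivial relations (e.g.\ $ab=ba$) without any non-empty word acting as the identity, and your argument as written only excludes the latter. The missing step is the one the paper supplies: if $w_1=w_2$ as \se s, then evaluating at $v$ and using that the leftmost letter decides whether the image lies in $U_g$ or in $U_h$ forces the first letters to coincide, and then injectivity of \se s lets you cancel that letter and induct on length (or, as in the paper, appeal to a minimal-length counterexample). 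Your sets do support this, but the cancellation-by-injectivity argument has to be made; it is not a consequence of ``no non-empty word is the identity''. A smaller inaccuracy in the same part: the hypotheses do not rule out $g^-=h^-$ (they only concern $g^+$ and $h^+$), so the four ends need not be pairwise distinct; this is harmless, since one may take a common neighbourhood for $g^-=h^-$, but the ``short case check'' you invoke does not establish what you claim.

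For the subdivided $3$-regular tree, two points. First, the nesting you need is for words extended on the right: the descendants of the node $w$ are the words $waw'$ and $wbw'$, whose images of $v$ lie in the disjoint sets $w(U_g)$ and $w(U_h)$; written with left-appended letters ($aw(v)$, $bw(v)$), as in your sketch, the first-letter bookkeeping does not give the divergence of sibling branches, and the claim that each $w(v)$ itself has degree~$3$ can fail because the paths from $w(v)$ towards its two successors may share an initial segment (the actual branch vertex sits further along) -- this is repairable but needs to be said correctly, and it is exactly what the paper's explicit subdivided $K_{1,3}$'s ($T_a$ spanned by $u,a(u),a(v)$, etc.) achieve. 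Second, your final ``upgrade'' of the rooted binary orbit tree to a $3$-regular tree is unnecessary and, as proposed, shaky: attaching a ray towards $g^-$ is unavailable when $g$ is parabolic, and the ``third ping-pong letter'' fallback is left vague. It is not needed because a subdivided rooted binary tree already contains a subdivision of the $3$-regular tree (take the two subtrees below the root's successors together with the path joining them through the root), so once the binary orbit tree is built correctly the furthermore-statement follows without any extra branch at the root.
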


\begin{proof}
Let $R_g,R_h$ be the maximal (double) ray with $g(R_g)\sub R_g$ and $h(R_h)\sub R_h$, respectively.
Let $x\in R_g$ be with $d(x,R_h)$ minimum and, if $x\in R_h$, such that the subray of $R_h$ in~$h^+$ starting at~$x$ intersects $R_g$ only in~$x$.
Let $U_g,U_h$ be connected neighbourhoods of~$g^+$ and of~$h^+$, respectively, such that $U_g\cap (U_h\cup R_h)=\es$ and $U_h\cap (U_g\cup R_g)=\es$, such that $x\notin U_g\cup U_h$, and such that $T-U_g$ and $T-U_h$ are connected, too.
In particular, $U_g$ does not contain $h^-$ and $U_h$ does not contain $g^-$ (if they exist), and they also avoid some neighbourhood around those ends.
By Lemma~\ref{lem_UniformConv}, there are $m,n\in\nat$ such that $g^m(\{x\}\cup U_h)\in U_g$ and $h^n(\{x\}\cup U_g)\in U_h$.
As $U_g$ is connected and contains $g^+$, we have $g^m(U_g)\sub U_g$ and, analogously, $h^n(U_h)\sub U_h$.
We claim that $a:=g^m$ and $b:=h^n$ freely generate a free monoid.

Suppose they do not generate a free monoid freely.
Then there are two distinct words $w_1,w_2$ over $\{a,b\}$ that represent the same \se\ of~$T$.
We choose them such that the length of~$w_1$ is minimum.
Since $a(U_g\cup U_h)\sub U_g$ and $b(U_g\cup U_h)\sub U_h$ we conclude first that $w_i(U_g\cup U_h)\sub U_g\cup U_h)$ for $i=1,2$ and second that the first letters of $w_1$ and $w_2$ must coincide, that is, $w_1=cw_1'$ and $w_2=cw_2'$ for some $c\in \{a,b\}$ and words $w_1',w_2'$ over $\{a,b\}$.
The choice of $w_1$ being minimum implies that $w_1=w_2$ as words.
This contradiction to the assumptions shows that $a$ and~$b$ freely generate a free monoid~$M'$.

Let us now construct a subtree of~$T$ that is a subdivision of the $3$-regular tree.
Let $u\in R_g\cap U_g$ be closest to~$x$ and let $v\in R_h\cap U_h$ be closest to~$x$.
Let $P$ be the $u$-$v$ path.
Let $T_a$ be the minimal subtree of~$T$ that contains $u, a(u), a(v)$ and let $T_b$ be the minimal subtree of~$T$ that contains $v, b(u), b(v)$.
Then $T_a$ and $T_b$ are subdivisions of $K_{1,3}$.
Set
\[
T':=P\cup\bigcup_{w\in M'} w(T_a)\cup\bigcup_{w\in M'} w(T_b).
\]
It is straight forward to check that $T'$ is a subdivision of a $3$-regular tree.
\end{proof}

As a corollary of Theorems~\ref{thm_FixedPoint} and~\ref{thm_FreeMonoid}, we obtain the following, which is a theorem by Laflamme, Pouzet and Sauer~\cite{LPS-Twins}.

\begin{cor}\label{cor_LPS}\cite[Theorem 1.1]{LPS-Twins}
Let $T$ be a tree that contains no subdivision of the $3$-regular tree.
Then $\Emb(T)$ fixes either a vertex, an edge, or a set of at most two ends of~$T$.\qed
\end{cor}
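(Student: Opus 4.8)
The plan is to read off the corollary directly from the two main theorems, using the extra hypothesis that $T$ contains no subdivision of the $3$-regular tree to rule out one of the four cases of Theorem~\ref{thm_FixedPoint}. First I would apply Theorem~\ref{thm_FixedPoint} to the monoid $M:=\Emb(T)$, which gives four alternatives. In case~(\ref{itm_FixedPoint1}) the monoid $\Emb(T)$ fixes a vertex or an edge, and we are done. In case~(\ref{itm_FixedPoint2}) it fixes a unique element of $\L(M)$, which is a single end, so it fixes a set of at most two (indeed, exactly one) ends. In case~(\ref{itm_FixedPoint3}) the set $\L(M)$ consists of precisely two elements; since every element of $\Emb(T)$ permutes $\L(M)$ by continuity of the induced map on ends, the whole monoid fixes this two-element set, again a set of at most two ends. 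So in each of the first three cases the conclusion holds.

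The key point is case~(\ref{itm_FixedPoint4}): here $M=\Emb(T)$ contains two non-elliptic elements $g,h$ with $g$ not fixing $h^+$ and $h$ not fixing $g^+$. But then Theorem~\ref{thm_FreeMonoid} applies to this pair and yields that $T$ contains a subdivided $3$-regular tree, contradicting the hypothesis of the corollary. Hence case~(\ref{itm_FixedPoint4}) cannot occur, and one of the first three alternatives must hold, each of which gives the desired conclusion.

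I expect there is essentially no obstacle here, since this is a straightforward combination of the two preceding theorems; the only point requiring a word of care is why $\Emb(T)$ fixes the set $\L(M)$ (rather than just some submonoid doing so) in cases~(\ref{itm_FixedPoint2}) and~(\ref{itm_FixedPoint3}). This follows because every \se\ induces a continuous self-map of $\Omega(T)$ and $\L(M)$ is, by its definition as an accumulation set of an orbit, invariant under the induced action of all of $M=\Emb(T)$; a finite invariant set of size one or two is then a fixed set of at most two ends. With that remark in place, the corollary follows immediately, so there is nothing further to prove.
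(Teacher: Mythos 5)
Your proposal is correct and is exactly the argument the paper intends: the corollary is stated as an immediate consequence of Theorems~\ref{thm_FixedPoint} and~\ref{thm_FreeMonoid}, with case~(\ref{itm_FixedPoint4}) excluded by the subdivided $3$-regular tree produced by Theorem~\ref{thm_FreeMonoid}. Your added remark that $\L(M)$ is invariant under the induced action on ends (so that cases~(\ref{itm_FixedPoint2}) and~(\ref{itm_FixedPoint3}) really give a fixed set of at most two ends) is a sensible point of care and matches how the paper uses this fact elsewhere.
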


Our last result of this section deals with the topology on the set of directions.

\begin{thm}
Let $T$ be a tree and $M$ a submonoid of $\Emb(T)$.
If $\LF(M)$ is infinite, then it is perfect.
\end{thm}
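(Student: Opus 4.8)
The plan is to show that $\LF(M)$ is closed (hence compact, since the end space of a tree is compact in our topology) and has no isolated points; together with being nonempty when infinite, this makes it a perfect set. Closedness of a limit set is essentially immediate from its definition as a set of accumulation points: if a sequence $(\omega_k)_{k\in\nat}$ in $\LF(M)$ converges to $\omega$, then for a fixed $v\in V(T)$ and any basic neighbourhood $C=C(T-u,\omega)$ of $\omega$, all but finitely many $\omega_k$ lie in $C$, and for each such $\omega_k$ there are infinitely many $g\in M$ with $g(v)\in C$; hence $\omega$ is itself an accumulation point of $\{g(v)\mid g\in M\}$, so $\omega\in\LF(M)$. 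So the real content is the absence of isolated points.

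For the no-isolated-points part I would use the dynamics supplied by Theorem~\ref{thm_DenseNumber}\,(\ref{itm_DenseNumber1}): since $|\LF(M)|\geq 2$, the set $\DF(M)$ of directions is dense in $\LF(M)$, so it suffices to show that no $\mu\in\DF(M)$ is isolated in $\LF(M)$. Fix such a $\mu$ and a non-elliptic $g\in M$ with $g^+=\mu$. Because $\LF(M)$ is infinite, pick $\omega\in\LF(M)$ with $\omega\notin\{g^+,g^-\}$ (recall $g$ fixes at most two ends). Then for each $v\in V(T)$ the sequence $\bigl(g^i(v)\bigr)_{i\in\nat}$ converges to $g^+=\mu$ by projectivity, while the ends $g^i(\omega)$ are pairwise distinct and all distinct from $\mu$ (an automorphism-type argument: $g^i(\omega)=\mu=g^+$ would force $\omega=g^{-i}(g^+)=g^+$, impossible, and similarly for $g^-$; and $g^i(\omega)=g^j(\omega)$ with $i<j$ would give $g^{j-i}(\omega)=\omega$, making $\omega$ a third fixed end of the non-elliptic \se\ $g^{j-i}$). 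Each $g^i(\omega)$ lies in $\LF(M)$ because, choosing a sequence $(h_k)_k$ in $M$ with $h_k(v)\to\omega$, we get $g^ih_k(v)\to g^i(\omega)$ as $k\to\infty$. It remains to check that $g^i(\omega)\to\mu$ as $i\to\infty$: given a basic neighbourhood $U$ of $\mu=g^+$, Lemma~\ref{lem_UniformConv} (applied with a neighbourhood $V$ of $g^-$ chosen to miss $\omega$, or, in the parabolic case, the version without $V$) yields $N$ with $g^n(T-V)\sub U$ for $n\geq N$; in particular $g^n(\omega)\in U$ for all large $n$. Hence $\mu$ is an accumulation point of the distinct points $g^i(\omega)\in\LF(M)$, so $\mu$ is not isolated. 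Therefore $\LF(M)$ has no isolated points, and being nonempty, compact, and perfect, it is a perfect set.

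I expect the main obstacle to be the clean handling of the uniform-convergence step for the directions $g^i(\omega)$. The subtlety is that Lemma~\ref{lem_UniformConv} is stated for sets $T-V$ with $V$ a neighbourhood of $g^-$ (hyperbolic case) or for $T-\{y\}$-type tails (parabolic case), so one must be a little careful to situate the fixed end $\omega$ strictly outside such a $V$; since $\omega\neq g^-$ this is possible, but it is the one place where the argument needs the explicit neighbourhood combinatorics rather than soft topology. Everything else — closedness, distinctness of the $g^i(\omega)$, and membership $g^i(\omega)\in\LF(M)$ — is routine given the earlier results.
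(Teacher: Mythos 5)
Your argument is essentially the paper's: both reduce to showing no point of $\LF(M)$ is isolated, both use the density of $\DF(M)$ from Theorem~\ref{thm_DenseNumber}\,(\ref{itm_DenseNumber1}) to write a would-be isolated point as $g^+$ for some non-elliptic $g\in M$, and both push a second limit point $\omega$ (the paper takes it in $\DF(M)$, you in $\LF(M)$ -- immaterial) towards $g^+$ via the iterates $g^i(\omega)$, using $h_k(v)\to\omega\Rightarrow g^ih_k(v)\to g^i(\omega)$ to keep these points inside $\LF(M)$. Your additional checks (closedness of $\LF(M)$, $g^i(\omega)\neq g^+$) are correct and routine; note that pairwise distinctness of the $g^i(\omega)$ is not even needed, which is fortunate because your ``third fixed end'' phrasing is slightly off when $g$ is parabolic (a priori $g^{j-i}$ could be hyperbolic with $(g^{j-i})^-=\omega$; ruling this out takes a small extra argument).

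The one step to flag is exactly the one you flagged: the convergence $g^i(\omega)\to g^+$ when $g$ is \emph{parabolic}. Lemma~\ref{lem_UniformConv} as stated is conditioned on $g$ being hyperbolic, and the ``version without $V$'' you invoke is not proved in the paper (its proof of the lemma only treats the hyperbolic case coherently), so as written this is an appeal to an unestablished statement rather than a proof. The parabolic case cannot be dodged, since $M$ may contain no hyperbolic elements at all (Example~\ref{ex_TreeWithoutHyper}). The needed claim is true, but requires its own argument; for instance: if $g^n(\omega)\not\to g^+$, then some vertex $y$ on the maximal ray $R_g$ lies in $g^n(T)$ for infinitely many $n$, hence has an infinite backward orbit $v_0=y$, $g(v_{n+1})=v_n$; the union of the paths $[v_n,v_{n+1}]$ together with the tail of $R_g$ from $y$ is then a subtree $Z$ with $g(Z)=Z$, so $g$ restricts to a fixed-point-free \emph{automorphism} of $Z$, which translates along an invariant double ray, making $g$ hyperbolic -- a contradiction. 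To be fair, the paper's own proof simply asserts the convergence $g^n(\mu)\to\eta$ with no justification at all, so you are, if anything, more explicit about where the real work lies; but to make your write-up complete you must supply an argument of the above kind (or extend Lemma~\ref{lem_UniformConv} to parabolic \se s) rather than cite a version of the lemma that the paper does not contain.
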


\begin{proof}
We have to show that $\LF(M)$ contains no isolated points.
Let us suppose that $\eta\in\LF(M)$.
As $\DF(M)$ is dense in $\LF(M)$ by Theorem~\ref{thm_DenseNumber}\,(\ref{itm_DenseNumber1}), we conclude that $\eta$ lies in $\DF(M)$.
So there is some non-elliptic $g\in M$ with $g^+=\eta$.
As $g$ fixes at most two ends but $\DF(M)$ is infinite by Theorem~\ref{thm_DenseNumber}, there is some $\mu\in DF(M)$ that is not fixed by~$g$.
Then the sequence $(g^n(\mu))_{n\in\nat}$ converges to~$\eta$.
Note that every $g^n(\mu)$ lies in $\DF(M)$: if $h_i(x)\to\mu$ for $i\to\infty$, we have $gh_i(x)\to g(\mu)$ for $i\to\infty$.
This contradicts our assumption.
\end{proof}

\section{Fixing an end}

A \se\ $g$ of~$T$ \emph{preserves an end $\omega$ forwards} if $g(R)\sub R$ for some ray $R\in\omega$ and it \emph{preserves $\omega$ backwards} if $R\sub g(R)$.
Note that any non-elliptic \se~$f$ preserves $f^+$ forwards and if $f$ is hyperbolic it additionally preserves $f^-$ backwards.
We say that $M$ \emph{preserves $\omega$ forwards} if every $g\in M$ preserves $\omega$ forwards and $M$ \emph{preserves $\omega$ backwards} if every $g\in M$ preserves $\omega$ backwards.

\begin{prop}
Let $T$ be a tree and $M$ a submonoid of $\Emb(T)$.
If $M$ preserves some end of~$T$ backwards, then all non-elliptic elements of~$M$ are hyperbolic.
\end{prop}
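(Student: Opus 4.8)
The plan is to argue by contradiction: suppose $M$ preserves some end $\omega$ backwards but contains a parabolic element $g$. Since $g$ is parabolic, by Theorem~\ref{thm_Halin} it preserves a maximal ray $R_g$ with $g(R_g)\subsetneq R_g$, and the only end fixed by $g$ is $g^+$. The first step is to observe that because $M$ preserves $\omega$ backwards, $g$ preserves $\omega$ backwards, i.e.\ there is a ray $R\in\omega$ with $R\subseteq g(R)$. In particular $g$ fixes $\omega$ as an end (it maps $R$ onto a ray containing $R$, hence equivalent to $R$). Since $g$ is parabolic and fixes only $g^+$, we must have $\omega=g^+$.

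The heart of the argument is then to derive a contradiction from the fact that $g$ preserves $g^+$ \emph{both} forwards (which every non-elliptic \se\ does) and backwards. Concretely, pick a ray $S\in g^+$ with $g(S)\subsetneq S$ (from non-ellipticity) and a ray $R\in g^+$ with $R\subseteq g(R)$ (from backwards preservation). Both $R$ and $S$ lie in $g^+$, so they eventually coincide; replacing them by common tails we may assume $R=S$, giving $g(R)\subsetneq R$ and $R\subseteq g(R)$ simultaneously — a contradiction, since $g(R)\subsetneq R$ forces $g(R)\neq R$ while $R\subseteq g(R)$ together with $g(R)\subseteq R$ (as $g(R)\subsetneq R$) forces $g(R)=R$. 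One must be slightly careful that "preserves backwards" with ray $R_0$ and "preserves forwards" with ray $S_0$ allows choosing a \emph{single} ray on which both conditions hold: since $R_0$ and $S_0$ are equivalent rays in the tree $T$, they share a common tail $R'$; then $g(R')$ is a tail of $g(R_0)$, and from $R_0\subseteq g(R_0)$ one gets that $R'$ is (eventually) contained in $g(R')$, while from $g(S_0)\subsetneq S_0$ one gets $g(R')\subsetneq R'$ after passing to a further common tail. This is the step where the combinatorics of rays and tails in a tree must be handled precisely.

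Alternatively, and perhaps more cleanly, I would use the distance function directly. If $g$ preserves $\omega$ backwards via $R$, then for the first vertex $r_0$ of $R$ we have $g(r_0)\in R$ with $g(r_0)$ strictly further along $R$ than $r_0$ (when the inclusion $R\subseteq g(R)$ is proper) or $g(r_0)=r_0$; in the parabolic case the inclusion is proper for a suitable $R$, so the sequence $(g^{-n}(r_0))$ — interpreted along $R$ — moves toward the initial vertex of $R$ and stabilises, which is incompatible with $g$ being parabolic (no fixed vertex). Either way, the contradiction comes from the incompatibility of $g(R)\subsetneq R$ and $R\subseteq g(R)$ on a common subray.

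The main obstacle I anticipate is purely bookkeeping: ensuring that the rays witnessing forward preservation and backward preservation can be taken to be the same ray (or nested appropriately) so that the two strict/non-strict inclusions can be compared on the nose. Once that is set up, the contradiction is immediate, and it shows no parabolic element can exist in $M$; combined with the Corollary that every \se\ is elliptic, hyperbolic, or parabolic, this yields that every non-elliptic element of $M$ is hyperbolic.
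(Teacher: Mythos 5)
Your proposal is correct and takes essentially the same route as the paper: the paper's proof simply observes that backwards preservation of $\omega$ forces $g^+\neq\omega$ for every non-elliptic $g\in M$, so $g$ fixes the two ends $g^+$ and $\omega$ and is therefore hyperbolic, and your contradiction argument (a parabolic $g$ would give $\omega=g^+$, incompatible with having $g(S)\subsetneq S$ and $R\subseteq g(R)$ on a common tail) is exactly the justification of that one asserted step, carried out in contrapositive form. Only your sketched alternative via the sequence $g^{-n}(r_0)$ should be discarded, since self-embeddings need not be invertible; the main common-tail argument is sound.
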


\begin{proof}
Let $\omega$ be an end that is preserved backwards and let $g\in M$ be not elliptic.
Since $g$ preserves $\omega$ backwards, we have $g^+\neq \omega$.
Since $g$ fixes $g^+$ and $\omega$, it is hyperbolic.
\end{proof}

Answering a question of Pouzet~\cite{P-Personal} we construct a graph that has precisely one fixed end, which is preserved backwards by all \se s.
We note that Lehner~\cite{L-Personal} also constructed an example different from ours.
Lehner's example is reproduced in \cite[Example~$6$]{LPS-Twins}.

\begin{exam}
Let $T$ be the rooted binary tree and let $x$ be its root.
To obtain $T'$, we add a new ray $R$ to~$T$ and join its first vertex with~$x$.
To all vertices that have distance $1$ modulo~$3$ to~$x$ we add $4$ new neighbours and to all vertices of distance $2$ modulo~$3$ to~$x$ we add $8$ new neighbours.
Let $T''$ be the resulting tree and let $\omega$ be the end of~$T''$ that contains~$R$.
It is straight forward to check that degree reasons imply that every \se\ fixes~$\omega$ and that no \se\ preserves $\omega$ forwards but not backwards.
\end{exam}

\section{Outlook}

In this section, we discuss open problems related to our main theorems.
Whereas the first one is a generalisation of the main theorems to graphs, the second one deals with an application to the tree alternative conjecture.

\subsection{Generalisation to graphs}\label{sec_Graphs}

Our investigations in this paper were focused on a special class of graphs: on trees.
Obviously, the following problem arises.

\begin{prob}\label{prob_graph}
Generalise our main theorems to \se s of graphs.
\end{prob}

When one looks at this problem, one naturally brings up the question about the behaviour of the ends: the \se s are injective maps on the vertex sets, but shall that extends to the ends as well?
A priori, this does not seem clear.
Let us give a short example to show that ends of graphs may collapse if we do not make further restriction on the \se s.

\begin{exam}\label{ex_graph}
Let $G$ be a complete graph with countably infinitely many vertices.
Let $x\in V(G)$.
We attach a new ray at~$x$ to obtain a new graph~$H$.
Then $H$ has two ends.
However, there is a \se\ $g$ of~$H$ that maps $G$ into a proper subgraph of~$G$ leaving infinitely many vertices of infinite degree outside of $g(G)$ and mapping the attached ray into $G\sm g(G)$.
Both ends of~$H$ are mapped by~$g$ to the same end of~$H$, the one originating from~$G$.\footnote{If we also require the \se s to preserve non-adjacency we can modify the example by subdividing every edge of the complete graph once. Then the \se\ of Example~\ref{ex_graph} induces a \se\ of the new graph that preserves not only adjacency but also non-adjacency.}
\end{exam}

Because of this possible collapse of ends under \se s, we may distinguish two cases for arbitrary graphs.
We call a \se\ of a graph \emph{strong} if it extends to an injective map on the ends and \emph{weak} otherwise.
So Problem~\ref{prob_graph} asks for a generalisation to weak \se s of graphs.
However, a main step might be to consider the following subproblem.

\begin{prob}
Generalise our main theorems to strong \se s of graphs.
\end{prob}

\subsection{Tree alternative conjecture}\label{sec_TAC}

Let $G$ and $H$ be non-isomorphic graphs.
We call $H$ a \emph{twin} of~$G$ if there are embeddings $G\to H$ and $H\to G$, i.\,e.\ injective maps $V(G)\to V(H)$ and $V(H)\to V(G)$ that preserve the adjacency relation.
Bonato and Tardif~\cite{BonTar06} made the following conjecture.

\begin{conjTA}
A tree has either none or infinitely many isomorphism classes of twins.
\end{conjTA}

As, for twins $G,H$, the embeddings $G\to H$ and $H\to G$ can be composed to a self-embedding $G\to G$, there is a natural connection to the \se s of~$G$ and Bonato and Tardif~\cite{BonTar06} suggested that the structure of the monoid of \se s may help solving their conjecture.
Indeed, Laflamme et al.~\cite{LPS-Twins} used this monoid to verify the conjecture for large classes of trees.
In order to state their result, we need some definitions.

Let $R=x_0x_1\ldots$ be a ray in a tree~$T$.
Let $T_i$ be the maximal subtree of~$T$ that is rooted at~$x_i$ and edge-disjoint from~$R$.
Let $\TF:=\{T_i\mid i\in I\}$ be a maximal set of these trees such that for no pair $T_i,T_j$ with $i\neq j$ we have embeddings $(T_i,x_i)\to (T_j,x_j)$ and $(T_j,x_j)\to (T_i,x_i)$.
If $\TF$ is finite we call $R$ \emph{regular}.
An end is \emph{regular} if it contains a regular ray.
It is easy to see that every ray in a regular end is regular.

A tree $T$ is \emph{stable} if one of the following holds.
\begin{enumerate}[(i)]
\item There is a vertex or an edge fixed by $\Emb(T)$;
\item two ends of~$T$ are fixed by $\Emb(T)$;
\item $T$ has an end that is preserved forwards and backwards by every \se\ of~$T$;
\item $T$ has a ray $R$ with $g(R)\sub R$ for all $g\in\Emb(T)$;
\item $T$ has a non-regular end preserved forwards by every \se\ of~$T$.
\end{enumerate}

\begin{thm}\cite[Theorem 1.9]{LPS-Twins}\label{thm_LPS}
The tree alternative conjecture holds for stable trees.\qed
\end{thm}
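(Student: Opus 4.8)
The plan is to reduce the statement to the structural dichotomy already established for submonoids of $\Emb(T)$, combined with the free-monoid/subdivided-tree phenomenon of Theorem~\ref{thm_FreeMonoid}. So let $T$ be a stable tree and set $M:=\Emb(T)$. First I would apply Theorem~\ref{thm_FixedPoint} to~$M$. If case~(\ref{itm_FixedPoint4}) occurred, Theorem~\ref{thm_FreeMonoid} would give a subdivided $3$-regular tree inside~$T$; I would want to rule this out, which is where the hypothesis of stability enters, since each of the five defining conditions of a stable tree produces a vertex, edge, or a finite $\Emb(T)$-invariant set of ends, and such an invariant configuration is incompatible with two non-elliptic elements whose directions are not fixed by each other (by the argument already used in the proof of Theorem~\ref{thm_FixedPoint}). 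Hence a stable tree falls into one of the cases (\ref{itm_FixedPoint1})--(\ref{itm_FixedPoint3}): $\Emb(T)$ fixes a vertex, an edge, a unique limit end, or exactly two limit ends.

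Next I would show how to produce, from such a fixed finite configuration, infinitely many twins whenever $T$ has at least one twin. The key observation is that if $H$ is a twin of~$T$ with embeddings $\varphi\colon T\to H$ and $\psi\colon H\to T$, then $g:=\psi\varphi\in\Emb(T)$, and any $\Emb(T)$-invariant vertex, edge, or end must be respected by~$g$ and hence ``seen'' inside the image $\psi(H)$. Using this, one localises the difference between $T$ and $H$ to a bounded region or to a single branch hanging off the fixed vertex/edge or off a ray converging to the fixed end. I would then iterate: by re-attaching or modifying that branch at successively deeper vertices of an $\Emb(T)$-invariant ray (or by taking repeated ``shifts'' of the modification along the ray fixed in cases~(\ref{itm_FixedPoint2})--(\ref{itm_FixedPoint3})), one manufactures a sequence $H_1,H_2,\dots$ of pairwise non-isomorphic trees, each admitting embeddings to and from~$T$. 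Distinctness of the isomorphism classes follows because the location of the modification relative to the fixed configuration is an isomorphism invariant. For the two-ends and vertex/edge cases this is essentially a finite-support argument; for the single-fixed-end and fixed-ray cases one uses the regularity bookkeeping (the set~$\TF$ attached to a ray) to see that a non-trivial twin forces infinitely many of them.

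I would handle the five stability clauses more or less uniformly via the above, but a couple deserve separate care. Clauses~(i) and~(ii) are the cleanest: the fixed vertex/edge or fixed pair of ends gives a rigid ``coordinate system''. Clauses~(iii) and~(iv) give an $\Emb(T)$-invariant end $\omega$ (respectively an invariant ray~$R$) preserved forwards, so every self-embedding essentially translates along~$R$; a twin then differs from~$T$ only in finitely many of the rooted side-trees $T_i$ along~$R$, and sliding that difference down the ray yields infinitely many twins. Clause~(v) is the subtlest: the preserved end is non-regular, so the family $\TF$ of incomparable side-trees is infinite, and one has to argue that the existence of one twin already lets one permute or duplicate infinitely many incomparable side-trees to get infinitely many non-isomorphic twins.

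The main obstacle I expect is precisely clause~(v) together with the single-fixed-end case of~(ii)/(iii): there the absence of a bounded fixed region means one cannot simply quote a finite-support argument, and one must combine the density statement $\DF(\Emb(T))$ dense in $\LF(\Emb(T))$ (Theorem~\ref{thm_DenseNumber}(\ref{itm_DenseNumber1})) with a careful analysis of how a self-embedding acts on the side-trees $\{T_i\}$ along a ray in the fixed end. Making the passage ``one twin $\Rightarrow$ infinitely many twins'' rigorous in that setting — rather than the structural dichotomy, which is already in hand — is the real content of the proof.
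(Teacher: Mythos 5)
This statement is not proved in the paper at all: it is quoted verbatim from Laflamme--Pouzet--Sauer \cite[Theorem~1.9]{LPS-Twins} and marked with \qed as an external input (it is then used as a black box in the proof of Corollary~\ref{cor_TAC}). So there is no in-paper argument to compare against, and what you offer would have to stand on its own as a proof of the cited result; as written it does not.

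Two concrete problems. First, your opening reduction is wrong: stability does \emph{not} rule out case~(\ref{itm_FixedPoint4}) of Theorem~\ref{thm_FixedPoint}. Clauses (iii)--(v) of the definition of a stable tree only provide a single $\Emb(T)$-invariant end $\omega$ (preserved forwards, possibly backwards), and this is perfectly compatible with two hyperbolic elements $g,h$ having $g^-=h^-=\omega$ but $g^+\neq h^+$, in which case neither fixes the direction of the other and $T$ contains a subdivided $3$-regular tree by Theorem~\ref{thm_FreeMonoid}; trees of the kind in Example~\ref{ex_TreeWithoutHyperContd} illustrate exactly this. Indeed, the whole point of clauses (iii)--(v) in the definition of stability is to cover such trees, not to exclude them, so the claim that ``such an invariant configuration is incompatible with'' case~(\ref{itm_FixedPoint4}) fails. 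Second, and more fundamentally, the heart of the theorem --- from one twin of $T$ manufacture infinitely many pairwise non-isomorphic twins --- is only gestured at (``re-attaching or modifying that branch'', ``sliding the modification down the ray''). You give no argument that the modified trees embed into $T$ and conversely, nor that they are pairwise non-isomorphic, and in the single-fixed-end and non-regular-end cases you explicitly defer ``the real content of the proof''. That content is precisely the delicate analysis carried out in \cite{LPS-Twins}; without it the proposal is a plan, not a proof.
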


With the help of our analysis of the monoid of \se s of trees, we are able to give a precise description of the open cases of the tree alternative conjecture.

\begin{cor}\label{cor_TAC}
The tree alternative conjecture holds for all trees $T$ whose monoid $M$ of \se s does not satisfy the following properties.
\begin{enumerate}[\rm (1)]
\item\label{itm_TAC1} There is a regular end $\omega$ of~$T$ fixed by~$M$ and all elements of~$M$ preserve $\omega$ forwards.
\item\label{itm_TAC2} There is no free submonoid of~$M$ generated freely by two non-elliptic elements.
\end{enumerate}
\end{cor}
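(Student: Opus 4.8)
The plan is to reduce the statement to Theorem~\ref{thm_LPS} by showing that, unless $M$ satisfies both~(\ref{itm_TAC1}) and~(\ref{itm_TAC2}), the tree $T$ is stable (and then to deal with the one remaining possibility separately). Write $M:=\Emb(T)$ and split according to whether~(\ref{itm_TAC2}) holds.

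Suppose first that~(\ref{itm_TAC2}) fails, so that there are non-elliptic $a,b\in M$ generating a free submonoid freely. Here I would argue that $T$ contains a subdivision of the $3$-regular tree --- for instance by first checking that the existence of such $a,b$ forces $M$ into case~(\ref{itm_FixedPoint4}) of Theorem~\ref{thm_FixedPoint}, so that Theorem~\ref{thm_FreeMonoid} applies directly --- and that, exploiting the self-similar pattern of the free submonoid, one can prune $T$ at a branch vertex of this subdivision and iterate the pruning along the free monoid to produce infinitely many pairwise non-isomorphic twins of $T$; so the conjecture holds. This part overlaps with the twin constructions of Laflamme et al.~\cite{LPS-Twins}.

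Now suppose that~(\ref{itm_TAC2}) holds; then by hypothesis~(\ref{itm_TAC1}) fails. Since a configuration as in case~(\ref{itm_FixedPoint4}) of Theorem~\ref{thm_FixedPoint} would, by Theorem~\ref{thm_FreeMonoid}, yield a free submonoid generated freely by two non-elliptic elements, $M$ is not in that case, so by Theorem~\ref{thm_FixedPoint} one of (\ref{itm_FixedPoint1})--(\ref{itm_FixedPoint3}) holds. If~(\ref{itm_FixedPoint1}) holds, $T$ is stable by clause~(i) of the definition of stability. If~(\ref{itm_FixedPoint3}) holds, then $\LF(M)$ has exactly two elements; since every $g\in M$ maps $\LF(M)$ into itself and induces an injection on $\Omega(T)$ (disjoint rays lie in distinct ends, so no \se\ of a tree identifies two ends), $M$ fixes the two-element set $\LF(M)$, and $T$ is stable by clause~(ii). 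The remaining case is~(\ref{itm_FixedPoint2}), where $M$ fixes a unique $\eta\in\LF(M)$; here I would show that every $g\in M$ preserves $\eta$ forwards, so that, since~(\ref{itm_TAC1}) fails, $\eta$ is a non-regular forward-preserved end and $T$ is stable by clause~(v). An elliptic $g$ fixes a vertex or an edge; it cannot invert an edge, since an inversion swaps the two components of $T$ minus that edge while $g$ fixes the end $\eta$, which lies in one of them; so $g$ fixes a vertex $u$ and therefore fixes the unique ray from $u$ to $\eta$ pointwise, hence preserves $\eta$ forwards. For a non-elliptic $g$ I would rule out $g^+\neq\eta$: as $g$ fixes $\eta$ and $M$ is not in case~(\ref{itm_FixedPoint4}), $g$ would then be hyperbolic with $g^-=\eta$, and every further non-elliptic element would, to avoid forming a case-(\ref{itm_FixedPoint4}) pair with $g$, have its direction in $\{g^+,\eta\}$; so $\DF(M)$ would have at most two elements, whence by Theorem~\ref{thm_DenseNumber} so would $\LF(M)$ --- but $\LF(M)$ already contains the two distinct ends $\eta$ and $g^+$, and the injectivity of \se s on the ends of a tree then forces $M$ to fix both, contradicting that $M$ fixes a unique element of $\LF(M)$. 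Hence $g^+=\eta$ and $g$ preserves $\eta$ forwards. Thus $M$ preserves $\eta$ forwards, and $T$ is stable. In every case Theorem~\ref{thm_LPS} applies and the conjecture holds for $T$.

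The main obstacle is the case where~(\ref{itm_TAC2}) fails: turning an abstract free submonoid on two non-elliptic \se s into a concrete infinite family of non-isomorphic twins is the substantial step, and is precisely where one must borrow the construction techniques of Laflamme et al.\ rather than use the monoid structure theory of this paper. A minor secondary point is the verification, in case~(\ref{itm_FixedPoint2}), that no \se\ preserves $\eta$ only backwards; this is where the size constraints of Theorem~\ref{thm_DenseNumber} and the injectivity of a tree's \se s on its ends are needed.
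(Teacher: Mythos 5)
Your second branch (when no two non-elliptic elements freely generate a free submonoid) is essentially the paper's proof, with some details filled in that the paper leaves implicit (invariance of $\LF(M)$ and injectivity of \se s on ends in case~(\ref{itm_FixedPoint3}), and the forward-preservation analysis in case~(\ref{itm_FixedPoint2})); that part is sound. The genuine gap is your first branch. You read the corollary as obliging you to prove the tree alternative conjecture when a free submonoid freely generated by two non-elliptic elements \emph{does} exist, and you propose to do so by pruning $T$ at a branch vertex of the subdivided $3$-regular tree and iterating along the free monoid to ``produce infinitely many pairwise non-isomorphic twins''. This step would fail. The conjecture asserts that a tree has either no twin or infinitely many, so one cannot hope to manufacture twins outright: a tree in this wild (non-scattered) situation may have no twin at all, and a pruned subtree is merely embeddable into $T$ --- nothing in the sketch gives an embedding of $T$ back into it, nor pairwise non-isomorphism, and Theorem~\ref{thm_LPS} (which concerns stable trees) offers no construction to borrow here, since the non-scattered case is precisely what Laflamme, Pouzet and Sauer leave open. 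In effect your first branch claims to settle the open case of the conjecture with a hand-wave.

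The paper does not attempt any such thing: its proof runs through the cases of Theorem~\ref{thm_FixedPoint}, concludes the conjecture via Theorem~\ref{thm_LPS} in cases (\ref{itm_FixedPoint1}) and (\ref{itm_FixedPoint3}) and in the tame subcases of (\ref{itm_FixedPoint2}), and treats the situation where Theorem~\ref{thm_FreeMonoid} yields two non-elliptic elements freely generating a free submonoid as one of the two \emph{excluded} (open) configurations, alongside the regular forwards-preserved fixed end of~(\ref{itm_TAC1}). So the burden you took on in your first branch is not part of the statement as the paper intends and proves it; admittedly the phrasing of item~(\ref{itm_TAC2}) (with its ``no'') invites your reading, but the proof makes clear that the free-submonoid situation is meant to be outside the corollary's scope, not something to be established. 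Removing your first branch and restating the hypothesis accordingly would leave you with a correct proof along the paper's lines; as written, the proposal contains an unprovable step.
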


\begin{proof}
Let $T$ be a tree and $M$ be the monoid of its \se s.
By Theorem~\ref{thm_FixedPoint}, one of the following holds.
\begin{enumerate}[(i)]
\item\label{itm_FixedPoint1a} $M$ fixes either a vertex or an edge of~$T$;
\item\label{itm_FixedPoint2a} $M$ fixes a unique element of~$\L(M)$;
\item\label{itm_FixedPoint3a} $\L(M)$ consists of precisely two elements;
\item\label{itm_FixedPoint4a} $M$ contains two non-elliptic elements that do not fix the direction of the other.
\end{enumerate}

While in cases (\ref{itm_FixedPoint1a}) and (\ref{itm_FixedPoint3a}) Theorem~\ref{thm_LPS} directly implies that the tree alternative conjecture holds, case (\ref{itm_FixedPoint4a}) together with Theorem~\ref{thm_FreeMonoid} implies~(\ref{itm_TAC2}).
So the only case that remains is if $M$ fixes a unique element $\omega$ of~$\LF(M)$.

We continue by analysing~$\DF(M)$.
If $\DF(M)$ is empty, then we only have elliptic elements in~$M$.
Let $f\in M$. As $f$ is elliptic and fixes~$\omega$, it fixes a vertex and hence the ray in~$\omega$ starting at this vertex.
So all elements of~$M$ preserve $\omega$ forwards and backwards.
By Theorem~\ref{thm_LPS}, the tree alternative conjecture holds for~$T$.

If $\omega$ is the only direction, then all elements of~$M$ preserve $\omega$ forwards.
If $\omega$ is regular, then we have (\ref{itm_TAC1}) and, if $\omega$ is not regular, then $T$ is stable and the tree alternative conjecture holds by Theorem~\ref{thm_LPS}.

If there are at least two directions distinct from~$\omega$, then Theorem~\ref{thm_FreeMonoid} implies (\ref{itm_TAC2}).

It remains to consider the case that there exists precisely one direction $g^+$ distinct from~$\omega$.
Since $|\DF(M)|=2$ and $\DF(M)$ is dense in $\LF(M)$ by Theorem~\ref{thm_DenseNumber}\,(\ref{itm_DenseNumber1}), we have $|\LF(M)|=2$.
So we are in case (\ref{itm_FixedPoint3a}) and the tree alternative conjecture holds as we have already seen.
\end{proof}

\section*{Acknowledgement}

I thank M.~Pouzet for discussions on this topic.

\providecommand{\bysame}{\leavevmode\hbox to3em{\hrulefill}\thinspace}
\providecommand{\MR}{\relax\ifhmode\unskip\space\fi MR }
\providecommand{\MRhref}[2]{%
  \href{http://www.ams.org/mathscinet-getitem?mr=#1}{#2}
}
\providecommand{\href}[2]{#2}

\end{document}